% Some conformally invariant one-forms in even dimensions of the weight which appears in the deformation complex

\documentclass{amsart}

\theoremstyle{definition}

\usepackage{amsmath}
\usepackage{amssymb}
\usepackage{amsthm}
\usepackage[msc-links,abbrev]{amsrefs}
\usepackage{hyperref}

\usepackage[noabbrev,capitalize]{cleveref}

\DeclareMathOperator{\Id}{Id}

\DeclareMathOperator{\im}{im}

\DeclareMathOperator{\tr}{tr}

\DeclareMathOperator{\dvol}{dvol}

\DeclareMathOperator{\Ric}{Ric}

\DeclareMathOperator{\Rm}{Rm}

\DeclareMathOperator{\End}{End}

\DeclareMathOperator{\Met}{Met}

\DeclareMathOperator{\Pf}{Pf}

\DeclareMathOperator{\tf}{tf}

\newcommand{\og}{\overline{g}}

\newcommand{\cPhi}{\widetilde{\Phi}}

\newcommand{\hg}{\widehat{g}}

\newcommand{\hT}{\widehat{T}}
\newcommand{\hW}{\widehat{W}}

\newcommand{\hnabla}{\widehat{\nabla}}

\newcommand{\hrho}{\widehat{\rho}}

\newcommand{\hxi}{\widehat{\xi}}

\newcommand{\lp}{\langle}
\newcommand{\rp}{\rangle}
\newcommand{\lv}{\lvert}
\newcommand{\rv}{\rvert}

\newcommand{\bCP}{\mathbb{C}P}

% Various math letters I use a lot

\newcommand{\mE}{\mathcal{E}}

\newcommand{\mK}{\mathcal{K}}
\newcommand{\mL}{\mathcal{L}}

\newcommand{\bH}{\mathbb{H}}

\newcommand{\bN}{\mathbb{N}}

\newcommand{\bR}{\mathbb{R}}

\newcommand{\Rho}{\mathrm{P}}

% Slash operators

% A spaced and sized vertical bar for ``such that'' in set notation

% \DeclareMathOperator{\cRic}{\widetilde{\Ric}}

% This puts comments in the right hand margin
\def\sideremark#1{\ifvmode\leavevmode\fi\vadjust{\vbox to0pt{\vss
 \hbox to 0pt{\hskip\hsize\hskip1em
 \vbox{\hsize3cm\tiny\raggedright\pretolerance10000
 \noindent #1\hfill}\hss}\vbox to8pt{\vfil}\vss}}}

\newcommand{\comment}[1]{}

\newtheorem{theorem}{Theorem}[section]
\newtheorem{lemma}[theorem]{Lemma}
\newtheorem{proposition}[theorem]{Proposition}
\newtheorem{corollary}[theorem]{Corollary}

\theoremstyle{definition}

\theoremstyle{remark}
\newtheorem{remark}[theorem]{Remark}

\numberwithin{equation}{section}

\begin{document}

\title[Some interesting conformally invariant one-forms]{An interesting family of conformally invariant one-forms in even dimensions}
\author{Jeffrey S. Case}
\thanks{JSC was supported by a grant from the Simons Foundation (Grant No.\ 524601)}
\address{109 McAllister Building \\ Penn State University \\ University Park, PA 16802}
\email{jscase@psu.edu}
\keywords{conformal invariant, conformally invariant one-form}
\subjclass[2010]{53A30}
\begin{abstract}
 We construct a natural conformally invariant one-form of weight $-2k$ on any $2k$-dimensional pseudo-Riemannian manifold which is closely related to the Pfaffian of the Weyl tensor.  On oriented manifolds, we also construct natural conformally invariant one-forms of weight $-4k$ on any $4k$-dimensional pseudo-Riemannian manifold which are closely related to top degree Pontrjagin forms.  The weight of these forms implies that they define functionals on the space of conformal Killing fields.  On Riemannian manifolds, we show that this functional is trivial for the former form but not for the latter forms.  As a consequence, we obtain global obstructions to the existence of an Einstein metric in a given conformal class.
\end{abstract}
\maketitle

\section{Introduction}
\label{sec:intro}

Recent work~\cites{CaseGover2013,CaseTakeuchi2019,Marugame2019} in CR geometry has identified an interesting family of natural CR invariant $(1,0)$-forms on all nondegenerate CR manifolds of dimension $2n+1$, $n\geq2$.  These $(1,0)$-forms can be regarded as CR invariant modifications of $\partial_b c_\Phi(S)$, where $c_\Phi(S)$ is the potential of a characteristic form of degree $2n$ determined by a homogeneous invariant polynomial $\Phi$ and the Chern tensor $S$.  For strictly pseudoconvex CR manifolds, a result of Takeuchi~\cite{Takeuchi2018} implies that these $(1,0)$-forms are all divergences.  This fact leads to counterexamples to Hirachi's conjecture on the generalization of the Deser--Schwimmer conjecture to CR geometry~\cite{Hirachi2013}.

The purpose of this article is to construct the conformal analogues of the above CR invariant one-forms.  The forms we construct retain three key properties of their CR analogues.  First, they are \emph{natural}; that is, they can be written as a linear combination of partial contractions of tensor products of the pseudo-Riemannian metric, its inverse, the Riemann curvature tensor, and its covariant derivatives; when restricted to oriented manifolds, we also allow these products to include factors of the pseudo-Riemannian volume form.  Second, they can be regarded as conformally invariant modifications of the exterior derivative of the Pfaffian of the Weyl tensor or, in the oriented case, the potential of a top degree Pontrjagin form.  Third, a result of Ferrand~\cite{Ferrand1971} and Obata~\cite{Obata1971} implies that, in Riemannian signature, the conformally invariant one-form related to the Pfaffian of the Weyl tensor is a divergence.  The conformally invariant one-forms related to top degree Pontrjagin forms need not be divergences, and their failure to be a divergence obstructs the existence of an Einstein metric in a given conformal class.

To make these points explicit requires some notation.  Let $(M^n,g)$ be a pseudo-Riemannian manifold.  Let $W_{ijpq}$ and $C_{ijp}$ denote the Weyl and Cotton tensors, respectively, with the convention $\nabla^pW_{ijpq}=(n-3)C_{ijq}$; here and throughout we use Penrose's abstract index notation~\cite{Penrose1984}.  Given $k\in\bN$, define
\begin{equation}
 \label{eqn:xi}
 \xi_i^{(k)} := \frac{1}{k!}\delta_{ii_2\dotsm i_{2k}}^{jj_2\dotsm j_{2k}} C_{jj_2}{}^{i_2}W_{j_3j_4}{}^{i_3i_4}\dotsm W_{j_{2k-1}j_{2k}}{}^{i_{2k-1}i_{2k}} + \frac{1}{2nk}\nabla_i \Pf^{(k)}(W),
\end{equation}
where $\delta_{ii_2\dotsm i_{2k}}^{jj_2\dotsm j_{2k}}$ is the generalized Kronecker delta and
\begin{equation}
 \label{eqn:Pf}
 \Pf^{(k)}(W) := \frac{1}{k!}\delta_{i_1\dotsm i_{2k}}^{j_1\dotsm j_{2k}} W_{j_1j_2}{}^{i_1i_2} \dotsm W_{j_{2k-1}j_{2k-2}}{}^{i_{2k-1}i_{2k}} .
\end{equation}
In dimension $n=2k$, it holds that $\Pf^{(k)}(W)$ is the Pfaffian of the Weyl tensor.

Suppose additionally that $(M^n,g)$ is an even-dimensional oriented manifold.  Set $n=2k$.  Denote by $\epsilon_{i_1\dotsm i_n}$ the pseudo-Riemannian volume form.  Let $\Phi$ be a homogeneous invariant polynomial of degree $k$; i.e.\ $\Phi$ is a linear combination of compositions of $\Id^{\otimes k}$ with braiding maps such that
\[ \Phi_{i_1\dotsm i_k}^{j_1\dotsm j_k} = \Phi_{i_{\sigma(1)}\dotsm i_{\sigma(k)}}^{j_{\sigma(1)}\dotsm j_{\sigma(k)}} \]
for all elements $\sigma$ of $S_k$, the symmetric group on $k$ elements.  Define
\begin{multline}
 \label{eqn:rho-Phi}
 \rho_i^\Phi := \frac{1}{(2k-1)!}\epsilon_i{}^{i_2\dotsm i_{2k}}\Phi_{s_1\dotsm s_k}^{t_1\dotsm t_k}C_{t_1}{}^{s_1}{}_{i_2}W_{t_2}{}^{s_2}{}_{i_3i_4}\dotsm W_{t_k}{}^{s_k}{}_{i_{2k-1}i_{2k}} \\ + \frac{1}{2k}\nabla_i p_\Phi(W),
\end{multline}
where
\begin{equation}
 \label{eqn:p-Phi} p_\Phi(W) := \frac{1}{(2k)!}\varepsilon^{i_1\dotsm i_{2k}} \Phi_{s_1\dotsm s_k}^{t_1\dotsm t_k} W_{t_1}{}^{s_1}{}_{i_1i_2} \dotsm W_{t_k}{}^{s_k}{}_{i_{2k-1}i_{2k}} .
\end{equation}
Note that $p_\Phi(W)=0$ if $k$ is odd and that $p_\Phi(W)=p_\Phi(\Rm)$ for all $k\in\bN$, where $p_\Phi(\Rm)$ is defined in terms of the Riemann curvature tensor $R_{ijkl}$ using \cref{eqn:p-Phi}.  The latter observation recovers the well-known fact~\cites{ChernSimons1974,BransonGover2007} that the Pontrjagin form $\star p_\Phi(\Rm)$ determined by $\Phi$ depends only on the Weyl tensor of $(M^n,g)$.

The one-form $\xi_i^{(k)}$ is conformally invariant in dimension $n=2k$ and the one-forms $\rho_i^\Phi$ are conformally invariant in the dimensions where they are defined.

\begin{theorem}
 \label{thm:invariant}
 Let $(M^{2k},g)$ be a pseudo-Riemannian manifold and let $\Phi$ be a homogeneous invariant polynomial of degree $2k$.  Then
 \begin{align*}
  e^{2k\Upsilon}\hxi_i^{(k)} & = \xi_i^{(k)} , \\
  e^{2k\Upsilon}\hrho_i^{\Phi} & = \rho_i^\Phi 
 \end{align*}
 for all conformal metrics $\hg:=e^{2\Upsilon}g$, where $\hxi_i^{(k)}$ and $\hrho_i^\Phi$ are defined in terms of $\hg$.
\end{theorem}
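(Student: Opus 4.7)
The plan is a direct computation: I substitute the conformal transformation laws of the Weyl and Cotton tensors and of the covariant derivative of a conformal density into the defining expressions for $\hxi_i^{(k)}$ and $\hrho_i^\Phi$, then check that the non-tensorial error terms arising from the Cotton tensor transformation exactly cancel the error terms arising from the conformal weight of the scalars $\Pf^{(k)}(W)$ and $p_\Phi(W)$. The required cancellation will be an algebraic identity valid only in the borderline dimension $n=2k$.

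The necessary transformation laws under $\hg = e^{2\Upsilon}g$, with $\Upsilon_i := \nabla_i \Upsilon$, are as follows. The $(3,1)$-Weyl tensor is conformally invariant, so $\hW_{ij}{}^{kl} = e^{-2\Upsilon}W_{ij}{}^{kl}$ and $\Pf^{(k)}(W)$ and $p_\Phi(W)$ are scalars of weight $-2k$. The Cotton tensor transforms inhomogeneously by $\widehat{C}_{ijk} = C_{ijk} - \Upsilon^l W_{ijkl}$ (the exact sign is convention-dependent), so that $\widehat{C}_{jj_2}{}^{i_2} = e^{-2\Upsilon}\bigl(C_{jj_2}{}^{i_2} - \Upsilon^l W_{jj_2}{}^{i_2}{}_l\bigr)$. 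Finally, for any scalar $f$ of conformal weight $w$, $\hnabla_i \hat f = e^{w\Upsilon}(\nabla_i f + w\Upsilon_i f)$.

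Plugging these into $\hxi_i^{(k)}$ and factoring out $e^{-2k\Upsilon}$ gives $e^{2k\Upsilon}\hxi_i^{(k)} = \xi_i^{(k)} + R_i$, where
\[
R_i = -\frac{1}{k!}\delta^{jj_2\cdots j_{2k}}_{ii_2\cdots i_{2k}}\Upsilon^l W_{jj_2}{}^{i_2}{}_l W_{j_3j_4}{}^{i_3i_4}\cdots W_{j_{2k-1}j_{2k}}{}^{i_{2k-1}i_{2k}} - \frac{1}{n}\Upsilon_i \Pf^{(k)}(W)
\]
combines the error from the Cotton transformation (first summand) with the weight-induced error from the gradient of the Pfaffian. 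The theorem for $\xi_i^{(k)}$ thus reduces to showing $R_i = 0$. This is the main obstacle: it follows from the \emph{dimensional vanishing} $\delta^{l j j_2 \cdots j_{2k}}_{m i i_2 \cdots i_{2k}} = 0$ in dimension $n = 2k$ (since one cannot antisymmetrize $2k+1$ indices), contracted with $\Upsilon^m$ and with $k$ copies of the Weyl tensor arranged to reproduce the first summand of $R_i$, and then simplified using the first Bianchi identity $W_{i[jkl]} = 0$ to discard the extraneous terms that appear in the expansion. The coefficient $\frac{1}{2nk} = \frac{1}{4k^2}$ is precisely what is needed for the combinatorics to close.

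For $\rho_i^\Phi$ the argument is formally identical. The volume form $\epsilon_{i_1\cdots i_n}$ has conformal weight $n$, so $\epsilon_i{}^{i_2\cdots i_n}$ has weight $-(n-2)$; combining this with one Cotton factor of principal weight $-2$ and $k-1$ Weyl factors of weight $-2$ once again yields total weight $-2k$, matching the weight of $p_\Phi(W)$. The $S_k$-symmetry of the invariant polynomial $\Phi$ collapses the several ways that the Cotton correction can appear into a single error expression, and the analogous dimensional identity $\epsilon_{[i_1\cdots i_n}\delta^{l]}_m = 0$, combined once more with first Bianchi, provides the required cancellation. The principal difficulty throughout is the combinatorial verification of these dimensional identities with all signs and coefficients correctly tracked; once that is done, the rest of the argument is mechanical.
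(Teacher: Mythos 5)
Your argument is correct, and for the $\xi_i^{(k)}$ half it is essentially the paper's proof: \cref{prop:xi-invariant} likewise reduces everything to the vanishing of the over-antisymmetrized Kronecker delta $\delta_{ii_1\dotsm i_{2k}}^{jj_1\dotsm j_{2k}}$ in dimension $2k$, contracted with $\Upsilon_j$ and $k$ Weyl factors (it phrases the computation through the conformal linearization $D_g$ rather than the finite transformation laws, but \cref{sec:bg} records that these are equivalent for natural homogeneous tensors). One small point: the first Bianchi identity is not needed to close the combinatorics. Expanding the $(2k+1)$-index delta according to which lower index pairs with the $\Upsilon$-contracted upper index, the term pairing it with the free index $i$ yields $\Upsilon_i\Pf^{(k)}(W)$, and the remaining $2k$ terms all coincide by the antisymmetry of each Weyl factor in its index pair together with the freedom to permute the $k$ factors; this factor of $2k$ is exactly what the coefficient $\tfrac{1}{2nk}$ is tuned to. For the $\rho_i^\Phi$ half the paper takes a genuinely different route: \cref{prop:general-rho-invariant} shows that the $(2k-1)$-form $(\star\rho^\Phi)_{i_2\dotsm i_{2k}}$ of \cref{eqn:general-rho} is conformally invariant in \emph{every} dimension $n$, where the cancellation is not a dimensional identity at all but an exact match between the Cotton error term and $(n-4k)$ times the same expression coming from \cref{lem:connection_transformation}, absorbed by the coefficient $-\tfrac{1}{n-4k}$; the invariance of $\rho_i^\Phi$ in dimension $2k$ then follows by applying the conformally invariant Hodge star. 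Your direct dimension-$2k$ computation via the $\epsilon$--$\delta$ identity is the Hodge dual of this and works, but the paper's version buys the stronger statement that $\star\rho^\Phi$ is a conformally invariant $(2k-1)$-form of weight $-2$ in all dimensions, which is used nowhere else but is of independent interest.
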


In terms of conformal density bundles, \cref{thm:invariant} states that $\xi_i^{(k)}$ and $\rho_i^\Phi$ are natural conformally invariant elements of $\mE_i[-2k]$ in dimension $2k$; see \cref{sec:bg} for definitions.  In particular, $\xi_i^{(k)}$ defines a conformally invariant functional on the space of compactly-supported vector fields.  More generally, let $(M^n,g)$ be a pseudo-Riemannian manifold.  Given an element $\omega_i\in\mE_i[-n]$, the formula
\begin{equation}
 \label{eqn:Omega}
 \Omega(X^i) := \int_M \omega_iX^i\,\dvol
\end{equation}
defines a conformally invariant functional on the space of compactly-supported vector fields on $M$.  More significantly, $\mE_i[-n]$ is the codomain of the formal adjoint $K^\ast\colon\mE_{(ij)_0}[2-n]\to\mE_i[-n]$ of the conformal Killing operator $K\colon\mE_i[2]\to\mE_{(ij)_0}[2]$, where $\mE_{(ij)_0}[w]$ denotes the space of conformally invariant, trace-free symmetric $(0,2)$-tensor fields with weight $w\in\bR$.   These operators are both conformally invariant, and the operator $K^\ast$ is a divergence: $K^\ast(T_{ij}):=-2\nabla^jT_{ij}$.

It is thus natural to ask whether $\xi_i^{(k)}$ or $\rho_i^\Phi$ are in the image of $K^\ast$.  A necessary condition is that, on compact manifolds, the associated functional $\Xi^{(k)}$ or $\Rho^\Phi$ annihilates conformal Killing fields.  For Riemannian manifolds, the fact that $K^\ast$ has surjective principal symbol implies that this condition is also sufficient.

On closed Riemannian manifolds, $\xi_i^{(k)}$ is in the image of $K^\ast$.

\begin{theorem}
 \label{thm:image}
 Let $(M^{2k},g)$ be a closed Riemannian manifold.  Then
 \[ \xi_i^{(k)} \in \im \left( K^\ast \colon \mE_{(ij)_0}[2-2k] \to \mE_i[-2k] \right) . \]
\end{theorem}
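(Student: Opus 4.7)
The plan is to verify the necessary (and, in the Riemannian setting, sufficient) condition that the conformally invariant functional
\[ \Xi^{(k)}(X) := \int_M \xi_i^{(k)} X^i \, \dvol \]
vanishes on every conformal Killing field $X$. Since $K^*$ is an overdetermined elliptic first-order operator with surjective principal symbol, standard Hodge-type arguments on a closed Riemannian manifold yield a closed-range decomposition and $\im K^* = (\ker K)^\perp$ in $L^2$. Hence $\xi_i^{(k)} \in \im K^*$ if and only if $\Xi^{(k)}$ annihilates $\ker K$, which is the finite-dimensional space of conformal Killing fields of $(M,g)$.

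To verify annihilation, I would invoke the Ferrand--Obata theorem to reduce to two cases: either $(M^{2k}, g)$ is conformally diffeomorphic to the round sphere, or there exists $\hat g \in [g]$ whose isometry group contains every conformal Killing field of $g$. The sphere case is immediate: $(S^{2k}, g_{\mathrm{rd}})$ is conformally flat, so $W \equiv 0$, and hence $\xi^{(k)} \equiv 0$ identically on $S^{2k}$; by the conformal invariance proved in \cref{thm:invariant}, $\Xi^{(k)}$ vanishes throughout the conformal class.

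In the remaining case, I would work with $\hat g$ and let $X$ be a Killing field. The $\nabla_i\Pf^{(k)}(W)$ piece of $\xi_i^{(k)}$ contributes zero: integration by parts gives $-\int_M \Pf^{(k)}(W)\,\divsymb(X)\,\dvol = 0$, since $\divsymb(X)=0$. For the leading Cotton--Weyl term, the case $k=1$ is trivial because $W \equiv 0$ in dimension two; for $k\ge 2$, use the second Bianchi identity $\nabla^p W_{ijpq} = (n-3)C_{ijq}$ to rewrite $C$ as a divergence of $W$, then integrate by parts. The derivative $\nabla^p$ distributes across $X^i$, producing $\nabla^p X^i = -\nabla^i X^p$ by the Killing equation, and across the remaining $W$ factors, where $\nabla W$ is again controlled by Bianchi. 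The total antisymmetrization enforced by $\delta_{ii_2\dotsm i_{2k}}^{jj_2\dotsm j_{2k}}$, combined with the Riemann symmetries of the Weyl tensor and the antisymmetry of $\nabla X$, should collapse every resulting term to zero.

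The main obstacle is precisely this final algebraic collapse. The product rule yields several terms in which antisymmetrized products of $\nabla W$'s and the Killing two-form $\nabla X$ are contracted against the generalized Kronecker delta, and tracking their Bianchi relations requires careful bookkeeping. A conceptually cleaner alternative, if it can be arranged, would be to exhibit $\xi_i^{(k)}$ directly as $K^*(T)$ for an explicit trace-free symmetric tensor $T$ constructed from $W$ and $C$; this would give the result on any closed Riemannian manifold and bypass Ferrand--Obata entirely, though whether such a formula exists given the specific normalization $\frac{1}{2nk}$ in \eqref{eqn:xi} appears delicate.
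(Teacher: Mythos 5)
Your overall architecture matches the paper's: reduce to showing $\Xi^{(k)}$ annihilates $\ker K$ via the $L^2$-splitting $\mE_i=\im K^\ast\oplus\ker K$, use Ferrand--Obata to dispose of the essential case (where $W=0$ forces $\xi^{(k)}=0$), and in the inessential case normalize so that $X$ is Killing and integrate the gradient term by parts against $\nabla_iX^i=0$. The gap is exactly where you flag it, and it is not merely bookkeeping. Substituting $C_{jj_2}{}^{i_2}=\frac{1}{n-3}\nabla^pW_{jj_2p}{}^{i_2}$ and integrating by parts produces terms of the form $\delta_{ii_2\dotsm}^{jj_2\dotsm}W_{jj_2p}{}^{i_2}\nabla^pW_{j_3j_4}{}^{i_3i_4}\dotsm X^i$, in which the derivative index $p$ is contracted against the third slot of another Weyl factor and is \emph{not} antisymmetrized with $j_3,j_4$ by the generalized Kronecker delta. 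The second Bianchi identity (\cref{lem:curvature_symmetries}) controls only $\nabla_{[p}W_{j_3j_4]}{}^{lm}$ and the divergence $\nabla^pW_{j_3j_4p}{}^{m}$, not these general first-derivative contractions, so the claimed ``collapse'' is not forced by the symmetries you cite. The paper closes this gap with \cref{thm:pfaffian}: it introduces the tensor $\Omega^{(k)}_{ij}$ built from $W$ \emph{and the Schouten tensor} $P$, and the identity \cref{eqn:divOmega} shows that $\nabla^j\Omega^{(k)}_{ij}$ reproduces exactly the Cotton--Weyl term --- the Cotton tensors arise from $2\nabla_{[i}P_{j]k}=C_{ijk}$ when the divergence hits a $P$ factor, which is the ingredient your pure-$W$/$C$ scheme lacks. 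Once $2k\xi_i^{(k)}=\nabla^j(\tf\Omega^{(k)})_{ij}+\frac{1}{2k}\nabla_i\Pf^{(k)}(\Rm)$ is in hand, the first term pairs to zero against any conformal Killing field by the definition of $K^\ast$, and only the $\Pf^{(k)}(\Rm)$ term requires $X$ to be genuinely Killing.

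Your closing suggestion --- exhibit $\xi_i^{(k)}=K^\ast(T)$ for an explicit $T$ and bypass Ferrand--Obata --- cannot work if $T$ is required to be a natural conformally invariant element of $\mE_{(ij)_0}[2-n]$: \cref{prop:unnatural} shows that in dimension four every such tensor is a multiple of the Bach tensor, which is divergence-free, while $\xi_i^{(2)}$ is nonzero on many K3 surfaces. The paper's $\tf\Omega^{(k)}$ is natural but deliberately \emph{not} conformally invariant, and the leftover gradient of $\Pf^{(k)}(\Rm)$ is precisely why the Ferrand--Obata normalization cannot be avoided.
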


This result is remarkable due to the fact that $\xi_i^{(2)}$ is not the divergence of a natural trace-free symmetric $(0,2)$-tensor field; see \cref{sec:natural}.
To the best of our knowledge, this is the first example of a natural conformally invariant tensor field which is in the image of a natural conformally invariant differential operator, but is not the image of a natural tensor field.
By contrast, in dimension four, the Bach tensor
\begin{equation*}
 B_{ij} := \nabla^sC_{sij} + W_{isjt}P^{st} \in \mE_{(ij)_0}[-2]
\end{equation*}
is the image of the Weyl tensor under the natural conformally invariant differential operator $W_{ijkl} \mapsto (\nabla^s\nabla^t + P^{st})W_{isjt}$ (cf.\ \cite{GoverPeterson2005}).

Our proof of \cref{thm:image} relies on the Ferrand--Obata Theorem~\cites{Ferrand1971,Obata1971}.  Taken together, \cref{thm:invariant,thm:image} indicate that $\xi_i^{(k)}$ should be regarded as the conformal analogue of the aforementioned CR invariant $(1,0)$-forms.

By contrast, the one-forms $\rho_i^\Phi$ need not be in the image of $K^\ast$.  In fact, the failure of this to hold gives a global obstruction to the existence of an Einstein metric in the given conformal class.

\begin{theorem}
 \label{thm:pontrjagin-not-image}
 Let $\Phi$ be a homogeneous invariant polynomial of degree $2k$, $k\in\bN$.
 \begin{enumerate}
  \item If $(M^{4k},g)$ is a closed conformally Einstein manifold of Riemannian signature, then $\rho_i^\Phi\in\im K^\ast$.
  \item There are examples of closed manifolds $(M^{4k},g)$ for which $\rho_i^\Phi\not\in\im K^\ast$.
 \end{enumerate}
\end{theorem}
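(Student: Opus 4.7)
For part~(1), my plan is to reduce to the Einstein case and invoke a rigidity theorem for conformal Killing fields. The image $\im K^*$ is a conformally invariant subspace of $\mE_i[-n]$, since $K$ is a conformally invariant differential operator between the density bundles $\mE^i[2]$ and $\mE_{(ij)_0}[2]$ and $K^*$ is its formal adjoint. Combined with \cref{thm:invariant}, this lets me pick any conformally Einstein representative of $[g]$, so I may assume $g$ itself is Einstein. On an Einstein manifold the Schouten tensor is parallel, so the Cotton tensor $C_{ijk}$ vanishes, and
\[ \rho_i^\Phi = \frac{1}{2k}\nabla_i p_\Phi(W) . \]
Because $K^*$ is overdetermined elliptic with surjective principal symbol in Riemannian signature, elliptic theory identifies $\im K^*$ with the $L^2$-orthogonal complement of $\ker K$ (the space of conformal Killing fields). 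So it suffices to show that $\Rho^\Phi(X):=\int_M \rho_i^\Phi X^i\,\dvol$ vanishes on every conformal Killing field $X$.

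To finish part~(1), I invoke the classical Nagano--Yano theorem (or its Einstein specialization): on a closed Einstein Riemannian manifold of dimension at least two, every conformal Killing field is in fact Killing, unless $(M,g)$ is homothetic to the round sphere. In the round-sphere case, $W\equiv 0$ and $C\equiv 0$ force $\rho_i^\Phi\equiv 0$. Otherwise any conformal Killing field $X$ is divergence-free, and integration by parts gives
\[ \Rho^\Phi(X) = \frac{1}{2k}\int_M \bigl(\nabla_i p_\Phi(W)\bigr) X^i\,\dvol = -\frac{1}{2k}\int_M p_\Phi(W)\,(\nabla_i X^i)\,\dvol = 0 . \]

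For part~(2), the plan is to exhibit a closed oriented Riemannian $4k$-manifold $(M,g)$ with a conformal Killing field $X$ satisfying $\Rho^\Phi(X)\neq 0$; the characterization of $\im K^*$ used above then forces $\rho_i^\Phi\notin\im K^*$. Natural candidates are non-Einstein metrics with continuous isometry group on a manifold whose top-degree Pontrjagin number determined by $\Phi$ is nonzero --- for example a non-Einstein $U(1)$- or $T^2$-invariant metric on a $\bCP^{2k}$-like space, or an $S^1$-invariant warped or cohomogeneity-one metric on a suitable $S^1$-fibration over a base carrying a nontrivial Pontrjagin number. Taking $X$ to be the generator of the $S^1$-action (a Killing field), one reduces $\Rho^\Phi(X)$ by symmetry to an integral on the orbit quotient; conformal invariance of the functional allows one to choose a convenient representative in which $W$ and $C$ simplify.

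The main obstacle lies precisely in this last step: producing a concrete symmetric example in which the full integrand $\rho_i^\Phi X^i$ can actually be evaluated, and verifying that the Cotton contribution $\epsilon_i{}^{i_2\cdots}\Phi\cdot C\cdot W\cdots X^i$ does not accidentally cancel the Pontrjagin-gradient contribution $\tfrac{1}{2k}X^i\nabla_i p_\Phi(W)$ after integration. Part~(1) already tells us that these two contributions sum to zero on any conformally Einstein representative (on a non-spherical Einstein metric $C=0$; on the round sphere $W=0$), so the non-cancellation must be extracted from a genuinely non-conformally-Einstein feature of the chosen metric --- and this is where any explicit construction must do real work.
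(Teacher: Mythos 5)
Your part (1) is correct and essentially reproduces the paper's argument (\cref{prop:einstein}): reduce by conformal invariance to an Einstein representative, observe that $C_{ijk}=0$ forces $\rho_i^\Phi=\frac{1}{2k}\nabla_i p_\Phi(W)$, use the surjectivity of the principal symbol of $K^\ast$ in Riemannian signature to reduce membership in $\im K^\ast$ to the vanishing of $\Rho^\Phi$ on $\mK$, and invoke the Killing-or-round-sphere dichotomy for conformal Killing fields on closed Einstein manifolds (the paper cites Obata; your attribution to Nagano--Yano is a cosmetic difference). Nothing to add there.

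The genuine gap is part (2): you have outlined a search strategy but produced no example, and you say so yourself. Since part (2) is the substantive half of the theorem, the proof is incomplete. The paper's example (\cref{prop:berger}) is $(S^3\times S^1,\og_t:=g_t+\theta^2)$ with $g_t$ a non-round Berger metric and $\Phi_{ij}^{rs}=\frac{1}{2}\delta_i^s\delta_j^r$, extended to dimension $4k$ by taking products with copies of $(\bCP^2,g_{FS})$ (\cref{prop:berger_product}). Two features of this example cut against your heuristics. First, $p_\Phi(W)=0$ identically for $\og_t$ (all Pontrjagin numbers of $S^3\times S^1$ vanish), so the ``Pontrjagin-gradient contribution'' whose possible cancellation worries you is simply absent: the entire value of $\Rho^\Phi(T)$ comes from the Cotton--Weyl term, which one computes explicitly to be $\Phi WC=-\frac{8t(t-1)^2}{3}\,\alpha\wedge\beta\wedge\gamma$, pairing nontrivially with the $S^1$ Killing field $T$. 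Thus your guiding principle of seeking a base ``carrying a nontrivial Pontrjagin number'' points in the wrong direction for the seed example; what is actually needed is a non-conformally-Einstein metric with enough symmetry that $\Phi W^{k-1}C$ can be computed and seen to be a nonzero multiple of a form dual to a Killing field. (Nontrivial Pontrjagin classes do enter only in the higher-dimensional products, where $\int_{\bCP^2}p_1\neq0$ propagates the four-dimensional nonvanishing.) Second, the computation is finite and elementary precisely because $\og_t$ is homogeneous, so $W$ and $C$ are read off from the structure constants; no orbit-quotient reduction is required. Until such an explicit computation is carried out, part (2) remains unproved.
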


The proof of the first statement relies on the fact that, except on the round sphere, any conformal Killing field on a closed Einstein manifold of Riemannian signature is necessarily Killing~\cite{Obata1962}.  In \cref{sec:pontrjagin}, we show that the product of $S^1$ and a non-round Berger three-sphere, as well as its products with copies of $\bCP^2$, give examples with $\rho_i^\Phi\not\in\im K^\ast$.
Our examples are not locally conformally Einstein.
We are not aware of an example of a locally conformally Einstein manifold which can be proven via \cref{thm:pontrjagin-not-image} to not be globally conformally Einstein.

Note that on locally conformally flat and obstruction flat even-dimensional $n$-manifolds, $K^\ast\colon\mE_{(ij)_0}\to\mE_i[-n]$ is the last nontrivial map in the conformal deformation complex~\cites{GoverPeterson2005,GasquiGoldschmidt1984} and the conformal deformation detour complex~\cite{BransonGover2007b}, respectively.  In particular, \cref{thm:image,thm:pontrjagin-not-image} indicate that there may be an interesting interpretation of the conformally invariant one-forms $\xi_i^{(k)}$ and $\rho_i^\Phi$ on even-dimensional obstruction flat manifolds.

As previously noted, $\xi_i^{(k)}$ is not the divergence of a natural trace-free symmetric $(0,2)$-tensor field.  However, one can express $\xi_i^{(k)}$ as the sum of the divergence of a natural trace-free symmetric $(0,2)$-tensor field and the exterior derivative of a natural scalar function.

\begin{theorem}
 \label{thm:pfaffian}
 Let $(M^{2k},g)$ be a pseudo-Riemannian manifold.  Define $\Omega_{ij}^{(k)}\in\Gamma(S^2T^\ast M)$ by
 \[ \bigl(\Omega^{(k)}\bigr)_i^j := \sum_{\ell=0}^{k-1} 4^{k-\ell}\frac{1}{\ell!(k-\ell)}\delta_{ii_1\dotsm i_{k+\ell}}^{jj_1\dotsm j_{k+\ell}} W_{j_1j_2}{}^{i_1i_2}\dotsm W_{j_{2\ell-1}j_{2\ell}}{}^{i_{2\ell-1}i_{2\ell}}P_{j_{2\ell+1}}^{i_{2\ell+1}}\dotsm P_{j_{k+\ell}}^{i_{k+\ell}} , \]
 where $P_{ij}$ is the Schouten tensor of $g$.  Then
 \begin{equation}
  \label{eqn:pfaffian}
  2k\xi_i^{(k)} = \nabla^j(\tf\Omega^{(k)})_{ij} + \frac{1}{2k}\nabla_i\Pf^{(k)}(\Rm) ,
 \end{equation}
 where $(\tf\Omega^{(k)})_{ij} := \Omega_{ij}^{(k)} - \frac{1}{2k}\tr\Omega^{(k)}\,g_{ij}$ is the trace-free part of $\Omega_{ij}^{(k)}$.
\end{theorem}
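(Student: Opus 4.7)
My strategy is to reduce \cref{eqn:pfaffian} to two independent identities, one for the trace of $\Omega^{(k)}$ and one for its divergence. Denote by $\eta_i$ the Cotton--Weyl contraction appearing as the first summand in the definition of $\xi_i^{(k)}$, so that $2k\xi_i^{(k)} = 2k\eta_i + \frac{1}{2k}\nabla_i\Pf^{(k)}(W)$ in dimension $n=2k$. Since
\[ \nabla^j(\tf\Omega^{(k)})_{ij} = \nabla^j\Omega^{(k)}_{ij} - \frac{1}{2k}\nabla_i\tr\Omega^{(k)}, \]
the claim is equivalent to the pair
\[ \tr\Omega^{(k)} = \Pf^{(k)}(\Rm) - \Pf^{(k)}(W) \qquad \text{and} \qquad \nabla^j\Omega^{(k)}_{ij} = 2k\eta_i, \]
which I would establish separately.

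For the trace identity I would substitute the Kulkarni--Nomizu decomposition $R_{ijpq} = W_{ijpq} + P_{ip}g_{jq} - P_{iq}g_{jp} - P_{jp}g_{iq} + P_{jq}g_{ip}$ into the formula for $\Pf^{(k)}(\Rm)$. The total antisymmetry of the generalized Kronecker delta collapses the four Schouten terms of $P\dotwedge g$ into a single one weighted by $4$, so each factor of $R$ effectively contributes $W + 4 P \delta$. Expanding binomially and using the standard Kronecker trace identity to reduce the rank of the generalized Kronecker for each residual $\delta^i_j$, one obtains
\[ \Pf^{(k)}(\Rm) = \sum_{\ell=0}^k \frac{4^{k-\ell}}{\ell!} Q^{(\ell)}, \]
where $Q^{(\ell)}$ denotes the generalized Kronecker contraction of $\ell$ Weyls and $k-\ell$ Schoutens against a rank-$(k+\ell)$ Kronecker. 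Taking the trace of $\Omega^{(k)}$ by the same Kronecker identity produces an extra factor $(k-\ell)$ that cancels the $(k-\ell)$ in the denominator of $\frac{4^{k-\ell}}{\ell!(k-\ell)}$, giving $\tr\Omega^{(k)} = \sum_{\ell=0}^{k-1}\frac{4^{k-\ell}}{\ell!}Q^{(\ell)}$; comparing with the expansion above yields the desired identity, since the omitted $\ell=k$ term equals $\Pf^{(k)}(W)$.

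For the divergence identity I would apply $\nabla^j$ to each summand $T_{ij}^{(\ell)}$ of $\Omega^{(k)}_{ij}$ and decompose the result into a \emph{Weyl-hit} contribution, in which $\nabla^j$ lands on one of the $\ell$ Weyl factors, and a \emph{Schouten-hit} contribution, in which it lands on one of the $k-\ell$ Schouten factors. The Schouten-hit reduces directly via Kronecker antisymmetry and the Cotton identity $2\nabla_{[a}P_{b]c} = C_{abc}$. For the Weyl-hit, the second Bianchi identity $\nabla_{[j}R_{j_1j_2]pq}=0$ together with the Kulkarni--Nomizu decomposition converts the antisymmetrized $\nabla W$ into the negative of the derivative of the Kulkarni--Nomizu term; the same four-term collapse and the Cotton identity then yield a Cotton tensor together with a residual $\delta^{i_2}_{j_2}$, which when absorbed into the generalized Kronecker supplies a factor $(k-\ell)$ in dimension $n=2k$. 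Both contributions share the tensor structure of one Cotton, some remaining Weyls and Schoutens, and a rank-$(k+\ell)$ Kronecker, with coefficients $-\frac{2\cdot 4^{k-\ell}}{(\ell-1)!}$ for the Weyl-hit from $T^{(\ell)}$ and $+\frac{2\cdot 4^{k-\ell}}{(\ell-1)!}$ for the Schouten-hit from $T^{(\ell-1)}$, so the two telescope. The only surviving term is the Schouten-hit from $T^{(k-1)}$, which has no Weyl-hit partner since $T^{(k)}$ is absent from the defining sum, and its coefficient $\frac{2}{(k-1)!} = \frac{2k}{k!}$ matches that of $2k\eta_i$. The main obstacle is this telescoping: the coefficients $\frac{4^{k-\ell}}{\ell!(k-\ell)}$ are the unique choice producing the cancellation at every level, and verifying the pairing demands careful tracking of the factor $4$ from the Kulkarni--Nomizu collapse, the factor $(k-\ell)$ from each Kronecker trace in dimension $2k$, and the signs from the second Bianchi identity.
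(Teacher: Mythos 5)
Your proposal is correct and follows essentially the same route as the paper: it reduces \cref{eqn:pfaffian} to the two identities $\tr\Omega^{(k)}=\Pf^{(k)}(\Rm)-\Pf^{(k)}(W)$ and $\nabla^j\Omega^{(k)}_{ij}=\frac{2k}{k!}\delta_{ii_2\dotsm i_{2k}}^{jj_2\dotsm j_{2k}}C_{jj_2}{}^{i_2}W_{j_3j_4}{}^{i_3i_4}\dotsm W_{j_{2k-1}j_{2k}}{}^{i_{2k-1}i_{2k}}$, proving the first by the Kulkarni--Nomizu decomposition and Kronecker trace identities and the second via \cref{lem:curvature_symmetries}. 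Your Weyl-hit/Schouten-hit telescoping, with the coefficients $\mp\frac{2\cdot 4^{k-\ell}}{(\ell-1)!}$, correctly fills in the ``straightforward computation'' that the paper leaves to the reader.
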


There is a nice heuristic based on Branson's method of analytic continuation in the dimension~\cite{Branson1995} which explains \cref{thm:invariant,thm:image,thm:pfaffian}.  Let $(M^n,g)$ be a pseudo-Riemannian manifold and fix $k\in\bN$.  Define
\[ T^{(k)}(W)_i^j := \frac{1}{k!}\delta_{ii_1\dotsm i_{2k}}^{j j_1\dotsm j_{2k}} W_{j_1j_2}{}^{i_1i_2}\dotsm W_{j_{2k-1}j_{2k}}{}^{i_{2k-1}i_{2k}} . \]
Observe that $T^{(k)}(W)_{ij}$ is conformally invariant and $T^{(k)}(W)_{ij}=0$ if $n\leq 2k$.  Straightforward computations establish that
\begin{equation}
 \label{eqn:ac_divergence}
 \nabla^j\bigl(\tf T^{(k)}(W)\bigr)_{ij} = -2k(n-2k)\xi_i^{(k)}
\end{equation}
and
\begin{equation}
 \label{eqn:ac_conformal}
 e^{2k\Upsilon}\hnabla^j\bigl(\tf \hT^{(k)}(\hW)\bigr)_{ij} = \nabla^j\bigl(\tf T^{(k)}(W)\bigr)_{ij} + (n-2k)\Upsilon^i\bigl(\tf T^{(k)}(W)\bigr)_{ij}
\end{equation}
for all $\hg:=e^{2\Upsilon}g$.  Combining~\cref{eqn:ac_divergence,eqn:ac_conformal} yields
\[ e^{2k\Upsilon}\hxi_i^{(k)} = \xi_i^{(k)} - \frac{1}{2k}\Upsilon^i\bigl(\tf T^{(k)}(W)\bigr)_{ij} \]
when $n>2k$.  \Cref{thm:invariant} follows by taking the limit $n\to2k$.  \Cref{eqn:ac_divergence} exhibits $\xi_i^{(k)}$ in the image of the divergence on $\mE_{(ij)_0}$; dividing by $n-2k$ and taking the limit $n\to 2k$ yields \cref{thm:image}, provided one can make sense of the limit
\begin{equation}
 \label{eqn:problem_limit}
 \lim_{n\to 2k} \frac{1}{n-2k}\left(\tf T^{(k)}(W)\right)_{ij} .
\end{equation}
Finally, the generalized Einstein tensor
\[ \bigl(E^{(k)}\bigr)_i^j := \frac{1}{k!}\delta_{ii_1\dotsm i_{2k}}^{jj_1\dotsm j_{2k}} R_{j_1j_2}{}^{i_1i_2}\dotsm R_{j_{2k-1}j_{2k}}{}^{i_{2k-1}i_{2k}} \]
is symmetric and divergence-free~\cite{Lovelock1971}.  Note that $\tr E^{(k)}=(n-2k)\Pf^{(k)}(\Rm)$ and
\[ E_{ij}^{(k)} = T^{(k)}(W)_{ij} + (n-2k)\Omega_{ij}^{(k)}, \]
where
\begin{multline*}
 \bigl(\Omega^{(k)}\bigr)_i^j := \sum_{\ell=0}^{k-1} 4^{k-\ell}\binom{k}{\ell}\frac{(n-k-\ell-1)!}{k!(n-2k)!}\delta_{ii_1\dotsm i_{k+\ell}}^{jj_1\dotsm j_{k+\ell}} \\ \times W_{j_1j_2}{}^{i_1i_2}\dotsm W_{j_{2\ell-1}j_{2\ell}}{}^{i_{2\ell-1}i_{2\ell}}P_{j_{2\ell+1}}^{i_{2\ell+1}}\dotsm P_{j_{k+\ell}}^{i_{k+\ell}} .
\end{multline*}
In particular,
\[ -\frac{n-2k}{n}\nabla_i\Pf^{(k)}(\Rm)  = \nabla^j\bigl(\tf T^{(k)}(W)\bigr)_{ij} + (n-2k)\nabla^j\bigl(\tf\Omega^{(k)}\bigr)_{ij} . \]
Combining this with \cref{eqn:ac_divergence}, dividing by $n-2k$, and taking the limit $n\to 2k$ yields \cref{thm:pfaffian}.

We do not here attempt to make rigorous sense of the limit $n\to2k$.  Indeed, the failure of $\xi_i^{(2)}$ to be the divergence of a natural element of $\mE_{(ij)_0}[-2]$ in dimension four indicates that it is particularly difficult to make sense of \cref{eqn:problem_limit}.  Instead, we give direct proofs of \cref{thm:invariant,thm:pfaffian} using elementary multilinear algebra and then deduce \cref{thm:image} from \cref{thm:pfaffian} and the Ferrand--Obata Theorem.

The above heuristic also illustrates the distinction between the one-forms $\xi_i^{(k)}$ and $\rho_i^\Phi$, namely through how they are naturally extended to other dimensions.  In terms of the wedge product and Hodge star on double forms~\cite{Labbi2005}, the discussion above realizes $\xi_i^{(k)}$ as the divergence of a dimensional multiple of $\star (W^{\wedge k} \wedge g^{\wedge(n-2k-1)})$ when $n>2k$.  By contrast, the natural extension of $\rho_i^\Phi$ to arbitrary dimension is in terms of (ordinary) differential forms.  More precisely, define
\begin{equation}
 \label{eqn:general-rho}
 \begin{split}
  (\star p_\Phi(W))_{i_1\dotsm i_{2k}} & := \Phi_{s_1\dotsm s_k}^{t_1\dotsm t_k}W_{[i_1i_2|t_1|}{}^{s_1}\dotsm W_{i_{2k-1}i_{2k}]t_k}{}^{s_k}, \\
  (\Phi W^{k-1}C)_{i_2\dotsm i_{2k}} & := \Phi_{s_1\dotsm s_k}^{t_1\dotsm t_k}C_{t_1}{}^{s_1}{}_{[i_2}W_{|t_2|}{}^{s_2}{}_{i_3i_4} \dotsm W_{|t_k|}{}^{s_k}{}_{i_{2k-1}i_{2k}]}, \\
  (\star\rho^\Phi)_{i_2\dotsm i_{2k}} & := (\Phi W^{k-1}C)_{i_2\dotsm i_{2k}} - \frac{1}{n-4k}\nabla^i(\star p_\Phi(W))_{ii_2\dotsm i_{2k}} ,
 \end{split}
\end{equation}
where our notation in the first and second lines means that we skew symmetrize over the indices $i_1,\dotsc,i_{2k}$ and $i_2,\dotsc,i_{2k}$, respectively.  Note that these objects are defined without reference to a given orientation.  These normalizations are such that, in dimension $n=2k$, the definitions of $\rho_i^\Phi$ by \cref{eqn:rho-Phi} and the above display agree.  Moreover, $(\star\rho^\Phi)_{i_2\dotsm i_{2k}}$ is a conformally invariant $(2k-1)$-form of weight $-2$ in all dimensions; see \cref{sec:conf}.

This note is organized as follows.  In \cref{sec:bg} we recall some relevant facts from conformal geometry.  In \cref{sec:conf} we prove \cref{thm:invariant}.  In \cref{sec:proofs} we prove \cref{thm:image,thm:pfaffian}.  In \cref{sec:pontrjagin} we prove \cref{thm:pontrjagin-not-image}.  In \cref{sec:natural} we show that $\xi_i^{(2)}$ is not the divergence of a natural element of $\mE_{(ij)_0}[-2]$.
\section{Background}
\label{sec:bg}

\subsection{Abstract index notation}

Let $(M^n,g)$ be a pseudo-Riemannian manifold.  We denote by $T^{(r,s)}M$ the tensor product of the bundles $\otimes^r TM$ and $\otimes^sT^\ast M$.  We use abstract index notation~\cite{Penrose1984} to denote sections of tensor bundles.  Specifically, we denote a section of $T^{(r,s)}M$ by $T_{i_1\dotsm i_s}^{j_1\dotsm j_r}$; the $r$ distinct superscripts denote contravariant indices and the $s$ distinct subscripts denote covariant indices.  Repeated indices denote contractions between the corresponding components.  We use the metric $g_{ij}$ to raise and lower indices in the usual way, and often offset subscripts and superscripts to clarify which components are raised or lowered.  For example, as a section of $T^{(1,3)}M$, the Riemann curvature tensor is defined by
\[ R_{ij}{}^k{}_\ell X^\ell := \nabla_i\nabla_jX^k - \nabla_j\nabla_iX^k \]
for all vector fields $X^k$, where $\nabla_i$ is the Levi-Civita connection.  The Ricci curvature is $R_{ij}=R_{ki}{}^k{}_j$ and the scalar curvature is $R=R_k{}^k$.  The \emph{Schouten tensor} of $(M^n,g)$ is
\[ P_{ij} := \frac{1}{n-2}\left( R_{ij} - Jg_{ij} \right), \]
where $J=\frac{1}{2(n-1)}R$ is the trace of $P_{ij}$.  When clear from context, we write covariant derivatives of a scalar function using subscripts; e.g.\ given $f\in C^\infty(M)$, we may write $f_i$ for $\nabla_if$.

We use round and square brackets to denote symmetrization and skew symmetrization, respectively, over the enclosed indices.  For example, if $T_{ijk}$ is a section of $T^{(0,3)}M$, then
\begin{align*}
 T_{(ijk)} & := \frac{1}{3!}\left(T_{ijk} + T_{ikj} + T_{jki} + T_{jik} + T_{kij} + T_{kji}\right), \\
 T_{[ijk]} & := \frac{1}{3!}\left(T_{ijk} - T_{ikj} + T_{jki} - T_{jik} + T_{kij} - T_{kji}\right)
\end{align*}
denote the projections of $T_{ijk}$ to its symmetric and antisymmetric parts, respectively.  In this notation, the algebraic symmetries of the Weyl tensor $W_{ijkl}$ are expressed as
\[ W_{ijkl} = W_{[ij][kl]} = W_{[kl][ij]}, \quad W_{[ijk]l}=0, \quad W_{ikj}{}^k = 0, \]
which express that $W_{ijkl}$ is a section of $S^2\Lambda^2T^\ast M$, that $W_{ijkl}$ satisfies the first Bianchi identity, and that $W_{ijkl}$ is trace-free, respectively.  The differential symmetries of the Weyl tensor $W_{ijkl}$ and the Cotton tensor $C_{ijk}$ are also succinctly expressed in abstract index notation:

\begin{lemma}
 \label{lem:curvature_symmetries}
 Let $(M^n,g)$, $n\geq3$, be a Riemannian manifold.  Then
 \begin{align*}
  2\nabla_{[i}P_{j]k} & = C_{ijk} , \\
  \nabla_{[i}W_{jk]}{}^{lm} & = -2C_{[ij}{}^{[l}\delta_{k]}^{m]} .
 \end{align*}
\end{lemma}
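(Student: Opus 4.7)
The plan is to derive both identities from the second Bianchi identity
\[ \nabla_{[i}R_{jk]lm} = 0 \]
combined with the standard decomposition
\[ R_{ijkl} = W_{ijkl} + P_{ik}g_{jl} + P_{jl}g_{ik} - P_{il}g_{jk} - P_{jk}g_{il}, \]
valid in any dimension $n\geq 3$, together with the fact that $\nabla g = 0$.

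For the first identity, the cleanest route is via the once-contracted Bianchi identity $\nabla^i R_{ijkl} = \nabla_k R_{jl} - \nabla_l R_{jk}$. I would substitute the decomposition on the left-hand side, use $R_{ij} = (n-2)P_{ij} + J g_{ij}$ on the right-hand side, and simplify. The contributions involving $\nabla J$ from the right, together with the trace contributions coming from $\nabla^i(P \otimes g)$-style terms on the left, cancel, leaving
\[ \nabla^p W_{ijpq} = (n-3)\left(\nabla_i P_{jq} - \nabla_j P_{iq}\right). \]
Comparing with the paper's convention $\nabla^p W_{ijpq} = (n-3) C_{ijq}$ then yields $C_{ijq} = 2\nabla_{[i}P_{j]q}$, which is the first claim.

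For the second identity, I would substitute the decomposition directly into the full (uncontracted) second Bianchi identity. Since $\nabla g = 0$, the Schouten half of the decomposition contributes four terms of the form $(\nabla P) \otimes g$, each skew-symmetrized over $\{i,j,k\}$:
\[ \nabla_{[i}W_{jk]lm} = -\nabla_{[i}P_{j|l}g_{k]m} - \nabla_{[i}P_{|k]|m}g_{jl} + \nabla_{[i}P_{j|m}g_{k]l} + \nabla_{[i}P_{|k]|l}g_{jm}. \]
After raising the indices $l, m$ and invoking the Cotton identity $C_{ijk} = 2\nabla_{[i}P_{j]k}$ just established, the four terms should consolidate into the doubly skew-symmetric expression $-2\, C_{[ij}{}^{[l}\delta_{k]}^{m]}$, the skew-symmetry in $\{l,m\}$ emerging naturally from the pairing of the two $P$-derivative pieces against the two Kronecker contractions.

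The main obstacle is the combinatorial bookkeeping of the two nested skew-symmetrizations. One must carefully track how the six-term expansion of the $\{i,j,k\}$ skew interacts with the four Kulkarni--Nomizu pieces and the induced skew-symmetry in $\{l,m\}$, and verify that the numerical factor $-2$ emerges correctly. A useful sanity check is that contracting the second identity with $g^{km}$ should reproduce the first, consistent with the paper's defining convention for the Cotton tensor.
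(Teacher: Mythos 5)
Your argument is essentially the paper's own (one-line) proof, fully expanded: the paper likewise treats the first identity as the customary definition of the Cotton tensor (compatible with the stated convention $\nabla^pW_{ijpq}=(n-3)C_{ijq}$ via the contracted second Bianchi identity, which is exactly what your first computation verifies) and obtains the second identity from the full second Bianchi identity applied to the decomposition $R_{ijkl}=W_{ijkl}+P_{ik}g_{jl}-P_{il}g_{jk}+P_{jl}g_{ik}-P_{jk}g_{il}$; your sketch of the skew-symmetrization bookkeeping leads to the correct coefficient $-2$. The one point to watch is that your route to the first identity divides by $n-3$, which is vacuous when $n=3$ (both sides of the convention vanish identically there); in that case one must simply take $C_{ijk}=2\nabla_{[i}P_{j]k}$ as the definition, as the paper does.
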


\begin{proof}
 With our convention $\nabla^sW_{ijsk}=(n-3)C_{ijk}$ from the introduction, the first equation is the customary definition of the Cotton tensor.  The second equation follows from the second Bianchi identity $\nabla_{[i}R_{jk]lm}=0$.
\end{proof}

We use the symbol $\mE$ together with abstract indices to denote the spaces of sections of a given tensor bundle.  For example, $\mE^i$ denotes the space of sections of $TM$ and $\mE_{[i_1\dotsm i_k]}$ denotes the space of $k$-forms.  We denote by $\mE_{(ij)_0}$ the space of trace-free symmetric $(0,2)$-tensor fields.

Suppose for the moment that $(M^n,g)$ is oriented.  Denote by $\epsilon_{i_1\dotsm i_n}$ the pseudo-Riemannian volume form determined by $(M^n,g)$ and the orientation.  Given an integer $0\leq k\leq n$, the \emph{Hodge star operator} $\star\colon\mE_{[i_1\dotsm i_k]}\to\mE_{[i_{k+1}\dotsm i_n]}$ is defined by
\begin{equation}
 \label{eqn:Hodge-star}
 (\star\alpha)_{i_{k+1}\dotsm i_n} := \frac{1}{k!}\epsilon^{s_1\dotsm s_k}{}_{i_{k+1}\dotsm i_n}\alpha_{s_1\dotsm s_k} .
\end{equation}
A straightforward computation shows that
\[ \epsilon_{s_1\dotsm s_ki_{k+1}\dotsm i_n}\epsilon^{s_1\dotsm s_kj_{k+1}\dotsm j_n} = k!\delta_{i_{k+1}\dotsm i_n}^{j_{k+1}\dotsm j_n} . \]
This implies the familiar identity
\begin{equation*}
 (\star\star\alpha)_{i_1\dotsm i_k} = (-1)^{k(n-k)}\alpha_{i_1\dotsm i_k} .
\end{equation*}

\subsection{Conformal density bundles} Let $(M^n,c)$ be a conformal manifold (possibly of mixed signature).  The conformal class $c$ is naturally an $\bR_+$-principle bundle with $\bR_+$-action given by $s\cdot g_x=s^2g_x$ for all $s\in\bR_+$, all $g\in c$, and all $x\in M$.  Given $w\in\bR$, the \emph{conformal density bundle of weight $w$} is the line bundle associated to $c$ via the representation $s\mapsto s^{-w/2}\in\End(\bR)$ of $\bR_+$.  We denote by $\mE[w]$ the space of smooth sections of this bundle; equivalently, an element of $\mE[w]$ is an equivalence class of pairs $(f,g)\in C^\infty(M)\times c$ with respect to the equivalence relation $(f,g)\sim(e^{w\Upsilon}f,e^{2\Upsilon}g)$ for all $\Upsilon\in C^\infty(M)$.  Similarly, we denote by $\mE_{i_1\dotsm i_s}^{j_1\dotsm j_r}[w]$ the space of smooth sections of the tensor product of $T^{(r,s)}M$ with the conformal density bundle of weight $w$.

Recall that a tensor field $A_{i_1\dotsm i_s}^{j_1\dotsm j_r}$ is \emph{natural} if it can be written as a linear combination of partial contractions of the Riemannian metric, its inverse, the Riemann curvature tensor, and its covariant derivatives; when restricted to oriented manifolds, we also allow these products to include factors of the Riemannian volume form.  When $M$ is fixed, we may regard $A_{i_1\dotsm i_s}^{j_1\dotsm j_r}$ as a map from $\Met(M)$, the space of pseudo-Riemannian metrics on $M$, to $\mE_{i_1\dotsm i_s}^{j_1\dotsm j_r}$.

A \emph{natural element of $\mE_{i_1\dotsm i_s}^{j_1\dotsm j_r}[w]$} is an equivalence class $[ A_{i_1\dotsm i_s}^{j_1\dotsm j_r}(g),g]$, where $A_{i_1\dotsm i_s}^{j_1\dotsm j_r}$ is a natural tensor field.
We say that $[A_{i_1 \dotsm i_s}^{j_1 \dotsm j_r}(g),g]$ is \emph{conformally invariant} if it is independent of the choice of metric $g \in c$.
For example, $g_{ij}$ determines a natural conformally invariant element of $\mE_{(ij)}[2]$; $W_{ijkl}$ determines a natural conformally invariant element of $\mE_{ijkl}[2]$; and, if $(M,c)$ is oriented, then $\epsilon_{i_1\dotsm i_n}$ determines a natural conformally invariant element of $\mE_{[i_1\dotsm i_n]}[n]$.  In particular, we may use $g_{ij}$ to raise and lower indices in conformal density bundles, and hence, for example, identify $\mE^i[0]\cong\mE_i[2]$.

If $(M^n,c)$ is closed, then the total integral of any conformal density $f\in\mE[-n]$ is well-defined: simply pick $g\in c$, integrate against the Riemannian volume density of $g$, and observe that the result is independent of the choice of $g$.  It follows that there is a conformally invariant pairing $\mE_i[w]\times\mE_i[2-n-w]\to\bR$ given by
\begin{equation}
 \label{eqn:pairing}
 \lp \alpha_i, \beta_j\rp := \int_M g^{ij}\alpha_i\beta_j .
\end{equation}
These comments extend to general conformal manifolds by requiring $f$ or one of $\alpha_i,\beta_i$ to be compactly-supported.

The \emph{conformal Killing operator} $K\colon\mE_i[2]\to\mE_{(ij)_0}[2]$,
\[ K(\alpha_i) := 2\nabla_{(i}\alpha_{j)} - \frac{2}{n}\nabla^k\alpha_k g_{ij} , \]
is conformally invariant.  The kernel $\mK:=\ker K\subset\mE_i[2]$ of $K$ is (after raising the index) the space of conformal Killing fields.  The conformal invariance of \cref{eqn:pairing} and the analogous conformally invariant pairing of $\mE_{(ij)_0}[w]$ and $\mE_{(ij)_0}[4-n-w]$ implies that the formal adjoint $K^\ast\colon\mE_{(ij)_0}[2-n]\to\mE_i[-n]$,
\[ K^\ast(A_{ij}) := -2\nabla^kA_{ki}, \]
of $K$ is also conformally invariant.

\subsection{Infinitesimal conformal invariance}

Recall that a natural tensor field $T_{i_1\dotsm i_s}^{j_1\dotsm j_r}$ is \emph{homogeneous of degree $w\in\bR$} if
\[ T_{i_1\dotsm i_s}^{j_1\dotsm j_r}(c^2g) = c^w T_{i_1\dotsm i_s}^{j_1\dotsm j_r}(g) \]
for all $g\in\Met(M)$ and all constants $c>0$.  Given such a tensor field, conformal invariance is equivalent to infinitesimal conformal invariance~\cite{Branson1985}.  More precisely, given such a tensor field and a metric $g\in\Met(M)$, the \emph{conformal linearization of $T_{i_1\dotsm i_s}^{j_1\dotsm j_r}$ at $g$} is the map $D_gT_{i_1\dotsm i_s}^{j_1\dotsm j_r}\colon C^\infty(M)\to\mE_{i_1\dotsm i_s}^{j_1\dotsm j_r}$ defined by
\begin{equation}
 \label{eqn:conformal_linearization}
 D_gT_{i_1\dotsm i_s}^{j_1\dotsm j_r}(\Upsilon) := \left.\frac{\partial}{\partial t}\right|_{t=0} e^{-wt\Upsilon}T_{i_1\dotsm i_s}^{j_1\dotsm j_r}(e^{2t\Upsilon}g) .
\end{equation}
Observe that $D_gT_{i_1\dotsm i_s}^{j_1\dotsm j_r}$ is linear and annihilates constants.  One says that $T_{i_1\dotsm i_s}^{j_1\dotsm j_r}$ is \emph{infinitesimally conformally invariant} if $D_gT_{i_1\dotsm i_s}^{j_1\dotsm j_r}=0$ for all $g\in M$.  By integrating along paths in the conformal class $c$, one observes that $T_{i_1\dotsm i_s}^{j_1\dotsm j_r}$ is infinitesimally conformally invariant if and only if $T_{i_1\dotsm i_s}^{j_1\dotsm j_r}$ determines a natural conformally invariant element of $\mE_{i_1\dotsm i_s}^{j_1\dotsm j_r}[w]$.

Our proof of the conformal invariance of $\xi_i^{(k)}$ and $\rho_i^\Phi$ relies on three ingredients.  First are the well-known conformal linearizations of the Weyl and Cotton tensors.

\begin{lemma}
 \label{lem:curvature_transformations}
 Let $(M^n,g)$ be a pseudo-Riemannian manifold and let $\Upsilon\in C^\infty(M)$.  Then
 \begin{align*}
  D_gW_{ijkl}(\Upsilon) & = 0 , \\
  D_gC_{ijk}(\Upsilon) & = W_{ij}{}^s{}_k\Upsilon_s .
 \end{align*}
\end{lemma}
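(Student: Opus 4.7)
The plan is to establish each identity by a first-order variational computation with $\delta g_{ij} = 2\Upsilon g_{ij}$. For the Weyl tensor, I would invoke the well-known conformal covariance of the $(0,4)$-tensor $W_{ijkl}$: under $\hat g = e^{2\Upsilon}g$ one has $\hat W_{ijkl} = e^{2\Upsilon}W_{ijkl}$. Substituting this into \cref{eqn:conformal_linearization} with homogeneity $w = 2$, the factors $e^{-2t\Upsilon}$ and $e^{2t\Upsilon}$ cancel identically in $t$, so $D_gW_{ijkl}(\Upsilon) \equiv 0$.

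For the Cotton tensor, I would linearize the identity $C_{ijk} = 2\nabla_{[i}P_{j]k}$ of \cref{lem:curvature_symmetries}. The two inputs are the standard variations
\[
  \delta\Gamma_{ij}^\ell = \delta_i^\ell\Upsilon_j + \delta_j^\ell\Upsilon_i - g_{ij}\Upsilon^\ell \qquad\text{and}\qquad \delta P_{jk} = -\nabla_j\nabla_k\Upsilon,
\]
the latter following from a short calculation using the conformal linearizations of the Ricci and scalar curvatures. Combining these via the Leibniz rule for the covariant derivative gives
\[
  \delta(\nabla_i P_{jk}) = -\nabla_i\nabla_j\nabla_k\Upsilon - (\delta\Gamma_{ij}^\ell)\,P_{\ell k} - (\delta\Gamma_{ik}^\ell)\,P_{j\ell}.
\]
To close the argument I would antisymmetrize this expression over $i,j$. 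The third-derivative term contributes a curvature via the Ricci identity $[\nabla_i,\nabla_j]\Upsilon_k = -R_{ij}{}^\ell{}_k\Upsilon_\ell$; expanding with the Weyl--Schouten decomposition
\[
  R_{ij}{}^\ell{}_k = W_{ij}{}^\ell{}_k + \delta_i^\ell P_{jk} - \delta_j^\ell P_{ik} + g_{jk}P_i{}^\ell - g_{ik}P_j{}^\ell
\]
isolates the target term $W_{ij}{}^s{}_k\Upsilon_s$. The remaining Schouten contributions from this decomposition should then cancel exactly against those arising from the $\delta\Gamma\cdot P$ pieces after antisymmetrization, while the manifestly $(ij)$-symmetric summands (those proportional to $g_{ij}$ or $P_{ij}$) vanish automatically.

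The main obstacle is purely bookkeeping: roughly half a dozen Schouten-dependent terms have to cancel pairwise, with signs determined by the Ricci identity, the sign conventions for the Weyl--Schouten decomposition, and the antisymmetrization. Since each ingredient is standard and the computation is entirely linear in $\Upsilon$, no deeper conceptual difficulty is anticipated beyond careful execution.
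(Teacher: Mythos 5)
Your proposal is correct, and there is nothing in the paper to compare it against: the lemma is stated without proof as a collection of well-known conformal linearizations, and your direct first-order computation is the standard argument one would supply. I checked the bookkeeping you deferred: with the paper's conventions ($C_{ijk}=2\nabla_{[i}P_{j]k}$, $[\nabla_i,\nabla_j]\Upsilon_k=-R_{ij}{}^{\ell}{}_k\Upsilon_\ell$, and $R_{ij}{}^{\ell}{}_k=W_{ij}{}^{\ell}{}_k+P_i{}^{\ell}g_{jk}-P_{ik}\delta_j^{\ell}+P_{jk}\delta_i^{\ell}-P_j{}^{\ell}g_{ik}$), the antisymmetrized $\delta\Gamma\cdot P$ terms are exactly the negatives of the non-Weyl part of $R_{ij}{}^{\ell}{}_k\Upsilon_\ell$, so the cancellation closes and only $W_{ij}{}^{s}{}_k\Upsilon_s$ survives.
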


Second is the conformal linearization of the exterior derivative of a natural homogeneous scalar function.

\begin{lemma}
 \label{lem:derivative_transformation}
 Let $(M^n,g)$ be a pseudo-Riemannian manifold and let $\Upsilon\in C^\infty(M)$.  For any natural homogeneous Riemannian scalar function $f$ of degree $w$, it holds that
 \[ D_g\nabla_i f(\Upsilon) = wf\Upsilon_i + \nabla_i D_gf(\Upsilon) . \]
\end{lemma}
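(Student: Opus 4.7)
The plan is to unwind the definition of the conformal linearization directly. First, I observe that $\nabla_i f$ is a natural $(0,1)$-tensor field which is homogeneous of the same degree $w$ as $f$: if $f(c^2g) = c^w f(g)$, then applying $\partial_i$ to both sides gives $(\nabla_i f)(c^2g) = c^w (\nabla_i f)(g)$, since on a scalar function the Levi-Civita connection is just the coordinate derivative and hence independent of the metric. Thus, by \cref{eqn:conformal_linearization}, the definition of $D_g\nabla_i f(\Upsilon)$ uses the same weight $w$ as $D_g f(\Upsilon)$.

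Set $\cg_t := e^{2t\Upsilon} g$ and let $\cnabla$ denote the Levi-Civita connection of $\cg_t$. The crucial observation is that $(\nabla_i f)(\cg_t) = \cnabla_i [f(\cg_t)] = \partial_i[f(\cg_t)]$ for every $t$, because the connection on scalars is just the partial derivative and involves no Christoffel symbols from $\cg_t$. Consequently, no explicit transformation law for the connection is needed; everything reduces to manipulating the weighting factor $e^{-wt\Upsilon}$.

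Next I would introduce the auxiliary function $F(t) := e^{-wt\Upsilon} f(\cg_t)$, so that $F(0) = f$ and $F'(0) = D_g f(\Upsilon)$. Using the product rule,
\[
\partial_i F(t) = e^{-wt\Upsilon}\partial_i[f(\cg_t)] - wt\Upsilon_i e^{-wt\Upsilon} f(\cg_t),
\]
so rearranging gives
\[
e^{-wt\Upsilon}(\nabla_i f)(\cg_t) = \partial_i F(t) + wt\,\Upsilon_i e^{-wt\Upsilon} f(\cg_t).
\]
Differentiating at $t = 0$, the first term contributes $\partial_i F'(0) = \nabla_i D_g f(\Upsilon)$, and the second term — which vanishes at $t=0$ — contributes only through its explicit $t$ factor, yielding $w \Upsilon_i f$. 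Combining these gives the stated identity.

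The proof is essentially a short bookkeeping exercise, and there is no genuine obstacle: the only subtle point is the initial observation that the Levi-Civita connection on a scalar function is metric-independent, which collapses what could be a messy computation involving the Christoffel symbols of $\cg_t$ into a trivial application of the product rule to the weighting factor.
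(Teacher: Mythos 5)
Your proof is correct and follows essentially the same route as the paper, whose entire proof is the remark that the identity ``follows directly from \cref{eqn:conformal_linearization}''; you have simply written out the bookkeeping (the metric-independence of $\nabla_i$ on scalars, the product rule applied to the weight factor $e^{-wt\Upsilon}$, and the observation that the term carrying an explicit factor of $t$ contributes only $wf\Upsilon_i$ at $t=0$) that the paper leaves implicit.
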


\begin{proof}
 This follows directly from \cref{eqn:conformal_linearization}.
\end{proof}

Third is the conformal linearization of the divergence of a natural homogeneous differential form.

\begin{lemma}
 \label{lem:connection_transformation}
 Let $(M^n,g)$ be a pseudo-Riemannian manifold and let $\Upsilon\in C^\infty(M)$.  For any natural homogeneous Riemannian $k$-form $\alpha_{i_1\dotsm i_k}$ of degree $w$, it holds that
 \[ D_g\nabla^i\alpha_{ii_2\dotsm i_k}(\Upsilon) = (n+w-2k)\Upsilon^i\alpha_{ii_2\dotsm i_k} + \nabla^i D_g\alpha_{ii_2\dotsm i_k}(\Upsilon) . \]
\end{lemma}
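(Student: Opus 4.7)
The plan is to compute $D_g\nabla^i\alpha_{ii_2\dotsm i_k}(\Upsilon)$ directly from the definition \cref{eqn:conformal_linearization}, using the standard conformal variation formulas
\[ \hat g^{ij} = e^{-2t\Upsilon}g^{ij}, \qquad \hat\Gamma^\ell_{jk} = \Gamma^\ell_{jk} + t\bigl(\delta^\ell_j\Upsilon_k + \delta^\ell_k\Upsilon_j - g_{jk}\Upsilon^\ell\bigr) + O(t^2), \]
where $\hat g := e^{2t\Upsilon}g$. Writing $\beta_{i_1\dotsm i_k} := D_g\alpha_{i_1\dotsm i_k}(\Upsilon)$ and using the $w$-homogeneity of $\alpha$, we have
\[ \hat\alpha_{i_1\dotsm i_k} = \alpha_{i_1\dotsm i_k} + t\bigl(w\Upsilon\alpha_{i_1\dotsm i_k} + \beta_{i_1\dotsm i_k}\bigr) + O(t^2). \]
Since $\nabla^i\alpha_{ii_2\dotsm i_k}$ is natural and homogeneous of degree $w-2$, the quantity to extract is $\partial_t|_{t=0}\bigl[e^{-(w-2)t\Upsilon}\hat\nabla^i\hat\alpha_{ii_2\dotsm i_k}\bigr]$.

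First I would expand $\hat\nabla_j\hat\alpha_{ii_2\dotsm i_k}$ to first order in $t$. The zeroth order recovers $\nabla_j\alpha_{ii_2\dotsm i_k}$; the first order contributions split into a piece $\nabla_j(w\Upsilon\alpha+\beta)_{ii_2\dotsm i_k}$, which combines the perturbation of $\alpha$ with the unperturbed Christoffel symbols, together with explicit $\Upsilon$-terms produced by the variation of each Christoffel factor acting on each of the $k$ tensor indices. Contracting with $\hat g^{ij}$ then produces a further first-order contribution $-2\Upsilon\nabla^i\alpha_{ii_2\dotsm i_k}$ from the variation of the inverse metric.

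The key simplifications come from the antisymmetry of $\alpha$. Among the three summands generated by the Christoffel variation on each trailing index $i_m$, the one proportional to $g^{i\ell}\alpha_{ii_2\dotsm\ell\dotsm i_k}$ vanishes outright, because it pairs a symmetric with an antisymmetric tensor; the other two reduce, after relabeling and a single antisymmetric swap moving an index back into the leading slot, each to $\Upsilon^i\alpha_{ii_2\dotsm i_k}$, so that each trailing index contributes $-2\Upsilon^i\alpha_{ii_2\dotsm i_k}$ in total. A direct trace handles the Christoffel variation on the contracted index $i$, yielding $(n-2)\Upsilon^i\alpha_{ii_2\dotsm i_k}$.

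Collecting all contributions and applying the prefactor $e^{-(w-2)t\Upsilon}$, the stray $\Upsilon\nabla^i\alpha_{ii_2\dotsm i_k}$ terms arising from $\partial_t\hat g^{ij}$, from expanding $\nabla^i(w\Upsilon\alpha)$, and from the homogeneity correction cancel exactly, leaving $\nabla^i\beta_{ii_2\dotsm i_k}$ together with a coefficient of $\Upsilon^i\alpha_{ii_2\dotsm i_k}$ that collects to $w+(n-2)-2(k-1) = n+w-2k$, matching the claimed identity. The only real obstacle is the careful sign bookkeeping from antisymmetry across the $k-1$ trailing indices; conceptually the argument reduces to the three standard conformal variation formulas together with a single application of antisymmetry.
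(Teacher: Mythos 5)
Your proof is correct, and it is essentially the paper's argument carried out in full detail: the paper simply invokes the definition of $D_g$ together with the conformal transformation law $\hnabla_i\alpha_j = \nabla_i\alpha_j -\Upsilon_i\alpha_j - \alpha_i\Upsilon_j + \Upsilon^s\alpha_s g_{ij}$, which is exactly the Christoffel-symbol variation you expand slot by slot. Your bookkeeping (the vanishing of the symmetric-against-antisymmetric contraction, the $(n-2)$ from the contracted slot, the $-2(k-1)$ from the trailing slots, and the cancellation of the $\Upsilon\nabla^i\alpha$ terms) all checks out and yields the coefficient $n+w-2k$ as claimed.
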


\begin{proof}
 This follows directly from \cref{eqn:conformal_linearization} and the fact that
 \[ \hnabla_i\alpha_j = \nabla_i\alpha_j -\Upsilon_i\alpha_j - \alpha_i\Upsilon_j + \Upsilon^s\alpha_s g_{ij} \]
 for all one-forms $\alpha_i$ and all metrics $g$ and $\hg=e^{2\Upsilon}g$ on $M$.
\end{proof}

\section{Conformal invariance}
\label{sec:conf}

In this section we prove \cref{thm:invariant}.  We separate the proof into two parts.

We begin by proving that $\xi_i^{(k)}$ is conformally invariant on $2k$-dimensional pseudo-Riemannian manifolds.

\begin{proposition}
 \label{prop:xi-invariant}
 Let $(M^{2k},g)$ be a pseudo-Riemannian manifold and define $\xi_i^{(k)}$ as in \cref{eqn:xi}.  For any $\Upsilon\in C^\infty(M)$, it holds that
 \[ e^{2k\Upsilon}\hxi_i^{(k)} = \xi_i^{(k)} , \]
 where $\hxi_i^{(k)}$ is defined in terms of $\hg:=e^{2\Upsilon}g$.
\end{proposition}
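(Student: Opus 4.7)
The plan is to establish infinitesimal conformal invariance, $D_g \xi_i^{(k)}(\Upsilon) = 0$ for every $\Upsilon \in C^\infty(M)$; since $\xi_i^{(k)}$ is natural and homogeneous of degree $-2k$, this is equivalent to the desired identity $e^{2k\Upsilon}\hxi_i^{(k)} = \xi_i^{(k)}$.

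First I will compute $D_g\xi_i^{(k)}(\Upsilon)$ by the product rule, using three ingredients. The Weyl tensor is conformally invariant, so $D_g W_{ab}{}^{cd}(\Upsilon) = 0$; \cref{lem:curvature_transformations} combined with $D_g g^{ij}(\Upsilon) = 0$ gives $D_g C_{jj_2}{}^{i_2}(\Upsilon) = W_{jj_2}{}^{si_2}\Upsilon_s$; and $\Pf^{(k)}(W)$ is a conformally invariant scalar of weight $-2k$, so \cref{lem:derivative_transformation} yields $D_g \nabla_i \Pf^{(k)}(W)(\Upsilon) = -2k\, \Pf^{(k)}(W)\,\Upsilon_i$. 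Combining these, I obtain
\[ D_g \xi_i^{(k)}(\Upsilon) \;=\; A_i \;-\; \frac{1}{n}\,\Pf^{(k)}(W)\,\Upsilon_i, \qquad n = 2k, \]
where
\[ A_i \;:=\; \frac{1}{k!}\,\delta^{jj_2\dotsm j_{2k}}_{ii_2\dotsm i_{2k}}\, W_{jj_2}{}^{si_2}\Upsilon_s \prod_{l=2}^{k} W_{j_{2l-1}j_{2l}}{}^{i_{2l-1}i_{2l}}. \]

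The heart of the proof is then the algebraic identity $A_i = \frac{1}{n}\Pf^{(k)}(W)\,\Upsilon_i$ in dimension $n = 2k$. The key input is that any generalized Kronecker delta on $n+1$ indices vanishes in dimension $n$; specifically,
\[ 0 \;=\; \delta^{sjj_2\dotsm j_{2k}}_{tii_2\dotsm i_{2k}}\,\Upsilon_s\, W_{jj_2}{}^{ti_2}\prod_{l=2}^{k} W_{j_{2l-1}j_{2l}}{}^{i_{2l-1}i_{2l}}. \]
Laplace-expanding this $(2k{+}1)$-index Kronecker delta along its first upper slot $s$ produces $2k+1$ terms indexed by $a = 0, 1, \dotsc, 2k$. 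The $a=0$ term evaluates to $k!\,A_i$, the $a=1$ term evaluates to $-k!\,\Pf^{(k)}(W)\,\Upsilon_i$ (directly recognizing the Pfaffian formula with $i_1 = t$), and each of the remaining terms $a = 2, 3, \dotsc, 2k$ places $\Upsilon$ against an upper index of one of the Weyl factors appearing in the product.

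The main obstacle is to verify that each of these $2k-1$ remaining terms reduces to $k!\,A_i$. This is a sign-bookkeeping exercise: using the antisymmetry of the generalized Kronecker delta in both its upper and lower slots, together with the antisymmetry of $W_{ab}{}^{cd}$ in its upper index pair, each such term can be reindexed into the canonical shape of $A_i$. Tracking the signs arising from the Laplace factor $(-1)^a$, the Kronecker-slot permutations, and the Weyl antisymmetry, these always combine to $+1$; combinatorially, this reflects the symmetric role played by the $2k$ upper Weyl indices in both $A_i$ and the Pfaffian (a sanity check in dimension $n=4$, $k=2$ confirms the signs for each of $a=2,3,4$). Summing all Laplace terms yields
\[ 0 \;=\; k!\,A_i - k!\,\Pf^{(k)}(W)\,\Upsilon_i + (2k-1)\,k!\,A_i \;=\; 2k\cdot k!\,A_i - k!\,\Pf^{(k)}(W)\,\Upsilon_i, \]
so that $A_i = \frac{1}{n}\Pf^{(k)}(W)\,\Upsilon_i$ and hence $D_g\xi_i^{(k)}(\Upsilon) = 0$.
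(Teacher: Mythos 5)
Your proposal is correct and follows essentially the same route as the paper: compute the conformal linearization via the transformation laws of the Cotton tensor and of $\nabla_i\Pf^{(k)}(W)$, then kill the resulting terms using the vanishing of a generalized Kronecker delta on $2k+1$ indices in dimension $2k$. The paper states that identity in the equivalent form $\delta_{ii_1\dotsm i_{2k}}^{jj_1\dotsm j_{2k}}W_{j_1j_2}{}^{i_1i_2}\dotsm W_{j_{2k-1}j_{2k}}{}^{i_{2k-1}i_{2k}}\Upsilon_j=0$ without writing out the Laplace expansion; your sign bookkeeping just makes that step explicit.
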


\begin{proof}
 As discussed in \cref{sec:bg}, it suffices to show that the conformal linearization of $\xi_i^{(k)}$ vanishes.  A direct computation using \cref{lem:curvature_transformations,lem:derivative_transformation} yields
 \begin{equation*}
  D\xi_i^{(k)}(\Upsilon) = \frac{1}{k!}\delta_{ii_2\dotsm i_{2k}}^{jj_2\dotsm j_{2k}} W_{jj_2}{}^{si_2} W_{j_3j_4}{}^{i_3i_4}\dotsm W_{j_{2k-1}j_{2k}}{}^{i_{2k-1}i_{2k}}\Upsilon_s - \frac{1}{2k}\Pf^{(k)}(W)\Upsilon_i .
 \end{equation*}
 Since $M$ is $2k$-dimensional, we conclude that
 \begin{align*}
  0 & = \frac{1}{k!}\delta_{ii_1\dotsm i_{2k}}^{jj_1\dotsm j_{2k}} W_{j_1j_2}{}^{i_1i_2}\dotsm W_{j_{2k-1}j_{2k}}{}^{i_{2k-1}i_{2k}}\Upsilon_j = -2kD\xi_i^{(k)}(\Upsilon) . \qedhere
 \end{align*}
\end{proof}

Let $\Phi$ be a homogeneous invariant polynomial of degree $k$.  We now turn to the proof that $\rho_i^\Phi$ is conformally invariant on oriented $2k$-dimensional pseudo-Riemannian manifolds.  We in fact prove the stronger claim that the $(2k-1)$-form $(\star\rho^\Phi)_{i_2\dotsm i_{2k}}$ defined by \cref{eqn:general-rho} is conformally invariant on any pseudo-Riemannian $n$-manifold.

\begin{proposition}
 \label{prop:general-rho-invariant}
 Let $\Phi$ be a homogeneous invariant polynomial of degree $k$, let $(M^n,g)$ be a pseudo-Riemannian manifold, and let $(\star\rho^\Phi)_{i_2\dotsm i_{2k}}$ be defined by \cref{eqn:general-rho}.  For any $\Upsilon\in C^\infty(M)$, it holds that
 \[ e^{2\Upsilon}(\widehat{\star\rho}^\Phi)_{i_2\dotsm i_{2k}} = (\star\rho^\Phi)_{i_2\dotsm i_{2k}}, \]
 where $(\widehat{\star\rho}^\Phi)_{i_2\dotsm i_{2k}}$ is defined in terms of $\hg:=e^{2\Upsilon}g$.
\end{proposition}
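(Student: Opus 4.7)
The plan is to apply the infinitesimal criterion for conformal invariance developed in \cref{sec:bg}. Observe that $(\star\rho^\Phi)_{i_2\dots i_{2k}}$ is a natural tensor field of homogeneity degree $-2$: the Cotton factor $C_{t_1}{}^{s_1}{}_{i_2}$ contributes weight $-2$, each Weyl factor $W_{t_l}{}^{s_l}{}_{i_{2l-1}i_{2l}}$ contributes weight $0$, and the index-raised derivative $\nabla^i$ preserves the total degree in the second term. It therefore suffices to verify $D_g(\star\rho^\Phi)_{i_2\dots i_{2k}}(\Upsilon)=0$ for every $\Upsilon\in C^\infty(M)$.

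First, \cref{lem:curvature_transformations} and the fact that $D_g$ annihilates the inverse metric imply that each factor $W_{t_l}{}^{s_l}{}_{i_{2l-1}i_{2l}}$ is conformally invariant, so $(\star p_\Phi(W))_{i_1\dots i_{2k}}\in\mE_{[i_1\dots i_{2k}]}[0]$. Applying \cref{lem:connection_transformation} with $w=0$ to this $2k$-form yields
\[
D_g\nabla^i(\star p_\Phi(W))_{ii_2\dots i_{2k}}(\Upsilon) = (n-4k)\,\Upsilon^i(\star p_\Phi(W))_{ii_2\dots i_{2k}} ,
\]
which is precisely what the prefactor $\tfrac{1}{n-4k}$ in \cref{eqn:general-rho} is designed to cancel. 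Next, using $D_gW=0$ together with $D_gC_{ijk}(\Upsilon)=W_{ij}{}^s{}_k\Upsilon_s$ from \cref{lem:curvature_transformations}, only the Cotton factor in $(\Phi W^{k-1}C)_{i_2\dots i_{2k}}$ contributes to its conformal linearization. The conformal invariance of $(\star\rho^\Phi)_{i_2\dots i_{2k}}$ then reduces to the algebraic identity
\[
D_g(\Phi W^{k-1}C)_{i_2\dots i_{2k}}(\Upsilon) = \Upsilon^i(\star p_\Phi(W))_{ii_2\dots i_{2k}} .
\]

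The main obstacle is this last identity, where the two alternations act over different index ranges (over $i_2,\dots,i_{2k}$ on the left, and effectively over $i,i_2,\dots,i_{2k}$ after the contraction with $\Upsilon^i$ on the right). I would prove it by viewing each Weyl tensor as an $\End(TM)$-valued $2$-form and the Cotton as an $\End(TM)$-valued $1$-form, so that $\star p_\Phi(W)$ and $\Phi W^{k-1}C$ are proportional to the Chern--Weil type expressions $\Phi(W^{\wedge k})$ and $\Phi(C\wedge W^{\wedge(k-1)})$, with explicit normalization constants $\tfrac{2^k}{(2k)!}$ and $\tfrac{2^{k-1}}{(2k-1)!}$ coming from the alternation conventions. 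The Leibniz rule for the interior product $X\mapsto X\contr(\cdot)$ combined with the full $S_k$-symmetry of $\Phi$ under permutation of its bundle-index pairs produces $k$ equal terms in $\Upsilon^\sharp\contr(\Phi(W^{\wedge k}))$, and a direct comparison shows that this factor of $k$ exactly cancels the ratio $\tfrac{(2k)!}{2^k}\cdot\tfrac{2^{k-1}}{(2k-1)!}=k$ of the combinatorial constants. Combined with the identification $D_gC=\Upsilon^\sharp\contr W$ (with the appropriate index placement obtained from the Weyl block symmetry $W_{abcd}=W_{cdab}$), this yields the required equality and hence $D_g(\star\rho^\Phi)_{i_2\dots i_{2k}}(\Upsilon)=0$.
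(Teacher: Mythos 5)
Your proposal is correct and follows essentially the same route as the paper: reduce to the vanishing of the conformal linearization, apply \cref{lem:connection_transformation} with $w=0$ to the weight-zero $2k$-form $\star p_\Phi(W)$ to produce the factor $(n-4k)$, and cancel it against the linearization of the Cotton factor computed from \cref{lem:curvature_transformations}. Your interior-product bookkeeping for the key identity $D_g(\Phi W^{k-1}C)_{i_2\dotsm i_{2k}}(\Upsilon)=\Upsilon^i(\star p_\Phi(W))_{ii_2\dotsm i_{2k}}$ is simply a careful way of carrying out what the paper calls a ``direct computation,'' and the combinatorial constants you track (the factor of $k$ from the Leibniz rule against the ratio of alternation normalizations) do check out.
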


\begin{proof}
 As discussed in \cref{sec:bg}, it suffices to show that the conformal linearization of $(\star\rho^\Phi)_{i_2\dotsm i_{2k}}$ vanishes.  A direct computation using \cref{lem:curvature_transformations} yields
 \[ D_g(\Phi W^{k-1}C)_{i_2\dotsm i_{2k}}(\Upsilon) = \Phi_{s_1\dotsm s_k}^{t_1\dotsm t_k}W_{i[i_2|t_1|}{}^{s_1}W_{i_3i_4|t_2|}{}^{s_2}\dotsm W_{i_{2k-1}i_{2k}]t_k}{}^{s_k}\Upsilon^i . \]
 A direct computation using \cref{lem:connection_transformation} yields
 \[ D_g\nabla^i\bigl(\star p_\Phi(W)\bigr)_{ii_2\dotsm i_{2k}}(\Upsilon) = (n-4k)\Phi_{s_1\dotsm s_k}^{t_1\dotsm t_k}W_{i[i_2|t_1|}{}^{s_1}W_{i_3i_4|t_2|}{}^{s_2}\dotsm W_{i_{2k-1}i_{2k}]t_k}{}^{s_k}\Upsilon^i . \]
 Combining the previous two displays yields $D_g(\star\rho^\Phi)_{i_2\dotsm i_{2k}}=0$.
\end{proof}

\begin{corollary}
 \label{cor:rho-invariant}
 Let $\Phi$ be a homogeneous invariant polynomial of degree $k$, let $(M^{2k},g)$ be an oriented pseudo-Riemannian manifold, and define $\rho_i^\Phi$ as in \cref{eqn:rho-Phi}.  For any $\Upsilon\in C^\infty(M)$, it holds that
 \[ e^{2k\Upsilon}\hrho_i^\Phi = \rho_i^\Phi , \]
 where $\hrho_i^\Phi$ is defined in terms of $\hg:=e^{2\Upsilon}g$.
\end{corollary}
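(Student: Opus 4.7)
The plan is to deduce the corollary directly from Proposition 3.2. In dimension $n=2k$, I claim that the one-form $\rho_i^\Phi$ defined in \cref{eqn:rho-Phi} is obtained from the $(2k-1)$-form $(\star\rho^\Phi)_{i_2\dots i_{2k}}$ defined in \cref{eqn:general-rho} by the Hodge dualization
\begin{equation*}
 \rho_i^\Phi = \frac{1}{(2k-1)!}\,\epsilon_i{}^{i_2\dots i_{2k}}(\star\rho^\Phi)_{i_2\dots i_{2k}}.
\end{equation*}
Given this identity, Proposition 3.2 (conformal invariance of $(\star\rho^\Phi)$ of weight $-2$), combined with the fact that $\epsilon_{i_1\dots i_n}$ is a natural element of $\mE_{[i_1\dots i_n]}[n]$ so that the raised form $\epsilon_i{}^{i_2\dots i_{2k}}$ has weight $n-2(2k-1)=2-2k$ in dimension $n=2k$, implies that $\rho_i^\Phi$ has total weight $(2-2k)+(-2)=-2k$. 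This is exactly the transformation law $e^{2k\Upsilon}\hrho_i^\Phi = \rho_i^\Phi$ asserted by the corollary.

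To verify the identity, I would inspect the two summands of $(\star\rho^\Phi)$ separately. For the first summand $(\Phi W^{k-1}C)_{i_2\dots i_{2k}}$, the skew-symmetrization over $i_2,\dots,i_{2k}$ appearing in its definition is redundant once it is contracted with the totally antisymmetric tensor $\epsilon_i{}^{i_2\dots i_{2k}}$, so the result matches the first term of $\rho_i^\Phi$ in \cref{eqn:rho-Phi} directly. For the second, divergence summand, the key point is that in dimension $2k$ any $(2k)$-form is a scalar multiple of the volume form: the skew-symmetrization in \cref{eqn:general-rho} yields
\begin{equation*}
 (\star p_\Phi(W))_{i_1\dots i_{2k}} = p_\Phi(W)\,\epsilon_{i_1\dots i_{2k}}.
\end{equation*}
Using $\nabla\epsilon=0$ to pass the divergence onto the scalar factor, the contraction identity $\epsilon_i{}^{i_2\dots i_{2k}}\epsilon_{ji_2\dots i_{2k}} = (2k-1)!\,g_{ij}$ from Section 2, and the observation that $-\frac{1}{n-4k}=\frac{1}{2k}$ in dimension $n=2k$, one recovers exactly the second term $\frac{1}{2k}\nabla_i p_\Phi(W)$ of $\rho_i^\Phi$.

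The main obstacle is just the matching calculation above, which is routine multilinear algebra using the antisymmetry of $\epsilon$ and the dimensional identification of top-degree forms with volume-form multiples. There is no genuine analytic or geometric content beyond what Proposition 3.2 has already settled; the whole role of this corollary is to rephrase the invariance of $(\star\rho^\Phi)_{i_2\dots i_{2k}}$ in the critical dimension $n=2k$ where the Hodge dualization is available and lands in a one-form of the right weight.
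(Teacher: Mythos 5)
Your proposal is correct and follows essentially the same route as the paper: the paper's proof observes that $\rho_i^\Phi=-(\star\star\rho^\Phi)_i$ and then combines \cref{prop:general-rho-invariant} with the conformal invariance of the Hodge star $\star\colon\mE_{i_1\dotsm i_{2k-1}}[-2]\to\mE_i[-2k]$, which is exactly your explicit $\epsilon$-contraction together with the weight count $(2-2k)+(-2)=-2k$. Your verification of the matching of the two summands (and the sign bookkeeping hidden in writing $\epsilon_i{}^{i_2\dotsm i_{2k}}$ rather than $\epsilon^{i_2\dotsm i_{2k}}{}_i$) simply spells out details the paper leaves implicit.
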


\begin{proof}
 It follows directly from \cref{eqn:Hodge-star} that $\rho_i^\Phi=-(\star\star\rho^\Phi)_i$.  The conclusion now follows from \cref{prop:general-rho-invariant} and the conformal invariance of the Hodge star operator $\star\colon\mE_{i_1\dotsm i_{2k-1}}[-2]\to\mE_i[-2k]$.
\end{proof}

Finally, combining \cref{prop:xi-invariant,cor:rho-invariant} yields \cref{thm:invariant}.
\section{$\xi_i^{(k)}$ and the image of $K^\ast$}
\label{sec:proofs}

There are two steps in our proof that $\xi_i^{(k)}\in\im K^\ast$ on closed Riemannian $2k$-manifolds.  The first step is to write $\xi_i^{(k)}$ in a way that is manifestly orthogonal to the space of Killing fields.  We accomplish this by proving \cref{thm:pfaffian}.

\begin{proof}[Proof of \cref{thm:pfaffian}]
 First observe that
 \[ \tr\Omega^{(k)} = \sum_{\ell=0}^{k-1} \frac{4^{k-\ell}}{\ell!}\delta_{i_1\dotsm i_{k+\ell}}^{j_1\dotsm j_{k+\ell}} W_{j_1j_2}{}^{i_1i_2}\dotsm W_{j_{2\ell-1}j_{2\ell}}{}^{i_{2\ell-1}i_{2\ell}}P_{j_{2\ell+1}}^{i_{2\ell+1}}\dotsm P_{j_{k+\ell}}^{i_{k+\ell}} . \]
 Since $R_{ijkl} = W_{ijkl} + P_{ik}g_{jl} - P_{il}g_{jk} + P_{jl}g_{ik} - P_{jk}g_{il}$, we compute that
 \[ \Pf^{(k)}(\Rm) = \sum_{\ell=0}^k \frac{4^{k-\ell}}{\ell!}\delta_{i_1\dotsm i_{k+\ell}}^{j_1\dotsm j_{k+\ell}} W_{j_1j_2}{}^{i_1i_2}\dotsm W_{j_{2\ell-1}j_{2\ell}}{}^{i_{2\ell-1}i_{2\ell}}P_{j_{2\ell+1}}^{i_{2\ell+1}}\dotsm P_{j_{k+\ell}}^{i_{k+\ell}} . \]
 Combining these formulae yields
 \begin{equation}
  \label{eqn:trOmega}
  \tr\Omega^{(k)} = \Pf^{(k)}(\Rm) - \Pf^{(k)}(W) .
 \end{equation}

 Next, a straightforward computation using \cref{lem:curvature_symmetries} yields
 \begin{equation}
  \label{eqn:divOmega}
  \nabla^j\bigl(\Omega^{(k)}\bigr)_{ij} = \frac{2k}{k!}\delta_{ii_2\dotsm i_{2k}}^{jj_2\dotsm j_{2k}} C_{jj_2}{}^{i_2}W_{j_3j_4}{}^{i_3i_4}\dotsm W_{j_{2k-1}j_{2k}}{}^{i_{2k-1}i_{2k}} .
 \end{equation}
 The desired conclusion follows from \cref{eqn:trOmega,eqn:divOmega}.
\end{proof}

The second step is to apply the Ferrand--Obata Theorem.

\begin{proof}[Proof of \cref{thm:image}]
 Suppose first that $(M^{2k},g)$ admits an essential conformal Killing field $X$; i.e.\ $\mL_X\hg\not=0$ for all conformal metrics $\hg\in[g]$.  The Ferrand--Obata Theorem~\cites{Ferrand1996,Obata1971} implies that $g$ is locally conformally flat.  Hence $\xi_i^{(k)}=0$.
 
 Suppose instead that $(M^{2k},g)$ does not admit an essential conformal Killing field.  Let $X$ be a conformal Killing field.  Then there is a conformally equivalent metric $\hg\in[g]$ such that $\mL_X\hg=0$.  In particular, $\hnabla_i X^i=0$.  It follows from \cref{thm:invariant,thm:pfaffian} that
 \[ \int_M \xi_i^{(k)}X^i\,\dvol_g = \int_M \hxi_i^{(k)}X^i\,\dvol_{\hg} = -\frac{1}{4k^2}\int_M \Pf^{(k)}(\Rm_{\hg})\,\hnabla_i X^i\,\dvol_{\hg} = 0 . \]
 
 Now, since $(M,g)$ is Riemannian, the divergence $K^\ast\colon\mE_{(ij)_0}\to\mE_i$ has surjective principal symbol.  Therefore we have the $L^2$-orthogonal splitting
 \[ \mE_i = \im K^\ast \oplus \ker K . \]
 The previous two paragraphs imply that $\xi_i^{(k)}\in\im K^\ast$.  The final conclusion follows from conformal covariance.
\end{proof}

\begin{remark}
 \label{rk:other-signature}
 Our proof of \cref{thm:image} uses the fact that if $X^i\in\mK$ is essential, then $g$ is locally conformally flat~\cites{Ferrand1996,Obata1971}.  Frances~\cite{Frances2015} has constructed counterexamples to this statement for manifolds of signature $(p,q)$, $p,q\geq2$, though it remains unknown whether this statement holds in Lorentzian signature.  However, it is straightforward to check that $\xi_i^{(k)}=0\in\im K^\ast$ for Frances' even-dimensional counterexamples.  In particular, it is not known if \cref{thm:image} is false in non-Riemannian signatures.
\end{remark}
\section{$\rho_i^\Phi$ and the image of $K^\ast$}
\label{sec:pontrjagin}

The purpose of this section is to prove \cref{thm:pontrjagin-not-image}.  We separate the proof into two pieces, corresponding to the two conclusions of \cref{thm:pontrjagin-not-image}.

We first prove that the restriction of the induced functional $\Rho^\Phi$ to the space $\mK$ of conformal Killing fields vanishes on any closed conformal manifold of Riemannian signature which admits an Einstein metric.

\begin{proposition}
 \label{prop:einstein}
 Let $\Phi$ be a homogeneous invariant polynomial of degree $k\in\bN$ and let $(M^{2k},g)$ be a closed conformally Einstein manifold of Riemannian signature.  Then
 \[ \Rho^\Phi(X^i) := \int_M \rho_i^\Phi X^i\,\dvol = 0 \]
 for all conformal Killing fields $X^i\in\mK$, where $\rho_i^\Phi$ is defined by \cref{eqn:rho-Phi}.
\end{proposition}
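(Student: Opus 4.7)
The plan is to use the conformal invariance of $\rho_i^\Phi$ (\cref{thm:invariant}) to pass to a distinguished representative and then exploit the rigidity of conformal Killing fields on closed Einstein Riemannian manifolds. Since $\rho_i^\Phi\in\mE_i[-2k]$ and $X^i\in\mE^i[2]\cong\mE_i[0]$ under the conformal metric, the pairing $\int_M\rho_i^\Phi X^i\,\dvol$ is conformally invariant; consequently I may replace $g$ by an Einstein representative $\hg\in[g]$ and compute with respect to $\hg$.

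Now I would split into two cases via Obata's theorem~\cite{Obata1962}: either $(M^{2k},\hg)$ is conformally equivalent to the round sphere, or every conformal Killing field of $(M,[g])$ is Killing with respect to $\hg$. In the round sphere case, $\hg$ is locally conformally flat, so $\hW=0$; inspecting \cref{eqn:rho-Phi} shows both terms defining $\hrho_i^\Phi$ vanish identically, so the functional is trivially zero. In the remaining case I may assume $X^i$ is a Killing field for $\hg$, so $\hnabla_{(i}X_{j)}=0$ and in particular $\hnabla_iX^i=0$.

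With $X^i$ Killing for $\hg$, I would handle the two terms of $\rho_i^\Phi$ separately. For the exact piece $\frac{1}{2k}\hnabla_i p_\Phi(\hW)$, integration by parts against $X^i$ produces a factor $\hnabla_iX^i=0$ and so contributes nothing. For the piece involving the Cotton tensor, the key observation is that on an Einstein manifold the Schouten tensor is a constant multiple of the metric, hence parallel, so
\[ \hC_{ijk} = 2\hnabla_{[i}\hP_{j]k} = 0 . \]
This kills the remaining summand of $\hrho_i^\Phi$ pointwise, and the integral vanishes.

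The only real subtlety is legitimizing the appeal to Obata's theorem and verifying the weight bookkeeping so that the integral $\int_M\rho_i^\Phi X^i\,\dvol$ really is metric-independent within the conformal class; once one has chosen $\hg$ Einstein, the rest is the two elementary observations $\hnabla_iX^i=0$ and $\hC_{ijk}=0$. I do not expect any computational obstacle beyond this.
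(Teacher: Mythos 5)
Your proposal is correct and follows essentially the same route as the paper: pass to an Einstein representative by conformal invariance of the pairing, observe that the Cotton term dies because the Schouten tensor is parallel, and then use Obata's theorem to reduce to either the round sphere (where $W=0$) or a genuine Killing field (where integration by parts against $\nabla_iX^i=0$ kills the exact term). The only cosmetic difference is that the paper discards the Cotton term immediately for any Einstein metric rather than inside the Killing-field case.
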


\begin{proof}
 Since $\Rho^\Phi(X^i):=\int\rho_i^\Phi X^i\,\dvol$ is conformally invariant, we may assume that $(M^{2k},g)$ is Einstein.  Hence
 \[ \rho_i^\Phi = \frac{1}{2k}\nabla_i p_\Phi(W) . \]
 Let $X^i\in\mK$.  Obata~\cite{Obata1962} proved that either $X^i$ is Killing or $(M^{2k},g)$ is isometric to the round $2k$-sphere.  In the former case,
 \[ \Rho^\Phi(X^i) = -\frac{1}{2k}\int_M p_\Phi(W)\nabla^iX_i\,\dvol = 0 . \]
 In the latter case, $\rho_i^\Phi=0$, and hence $\Rho^\Phi(X^i)=0$.
\end{proof}

We now construct examples of closed Riemannian $4k$-manifolds and homogeneous invariant polynomials $\Phi$ of degree $2k$ for which $\Rho^\Phi\rv_\mK\not=0$.  To that end, let $\bH$ denote the space of quaternions and let $X,Y,Z$ be the frame of left-invariant vector fields on $S^3\subset\bR^4\cong\bH$ which restrict to $i,j,k$ at the identity.  Let $\alpha,\beta,\gamma$ be the dual coframe.  Given $t>0$, the \emph{Berger sphere} is the Riemannian manifold $(S^3,g_t)$, where
\[ g_t := t \alpha\otimes\alpha + \beta\otimes\beta + \gamma\otimes\gamma . \]

We begin by finding an example in dimension four.

\begin{proposition}
 \label{prop:berger}
 Fix $\Phi_{ij}^{rs}=\frac{1}{2}\delta_i^s\delta_j^r$.  Let $(S^3,g_t)$, $t>0$, be a Berger sphere and let $\theta$ be a nonvanishing left-invariant one-form on $S^1$.  If $t\not=1$, then the Riemannian product $(S^3\times S^1,\og_t:=g_t+ \theta^2)$ is such that
 \[ \Rho^\Phi\rv_{\mK} \not= 0 . \]
 In particular, if $t\not=1$, then $(S^3\times S^1,\og_t)$ is not conformal to an Einstein metric.
\end{proposition}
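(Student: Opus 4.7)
The plan is to exhibit the $S^1$-rotation Killing field $X := \partial_\theta$ as a conformal Killing field on which $\Rho^\Phi$ does not vanish, which immediately gives $\Rho^\Phi|_\mK \neq 0$; the final conclusion (that $\bar g_t$ is not conformally Einstein when $t\neq1$) then follows from the contrapositive of \cref{prop:einstein}. The crucial structural fact is that $SU(2)\times U(1)$ acts transitively on $S^3\times S^1$ by isometries of $\bar g_t$, so every natural scalar invariant of $\bar g_t$ is constant on $M$.

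First I would use this homogeneity to reduce $\Rho^\Phi(X)$ to a pointwise quantity. Since $p_\Phi(W)$ is a natural scalar, $\nabla_i p_\Phi(W) = 0$ pointwise, so the second summand of \cref{eqn:rho-Phi} already vanishes. Substituting the chosen $\Phi_{s_1s_2}^{t_1t_2} = \tfrac{1}{2}\delta_{s_1}^{t_2}\delta_{s_2}^{t_1}$ collapses the first summand to $\rho^\Phi_i = \tfrac{1}{12}\epsilon_i{}^{jkl}\tr(C_j W_{kl})$, where $(C_j)^a{}_b := C^a{}_{bj}$ and $(W_{kl})^a{}_b := W^a{}_{bkl}$ are viewed as endomorphisms of $TM$. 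Homogeneity again makes the integrand $\rho^\Phi_i X^i$ a constant, so $\Rho^\Phi(X) = \rho^\Phi_4 \cdot \vol(M)$ in any orthonormal frame with $e_4 = X$, and it suffices to evaluate $\rho^\Phi_4$ at one point.

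I would carry out this evaluation in the left-invariant orthonormal frame $e_1 := X/\sqrt{t}$, $e_2 := Y$, $e_3 := Z$, $e_4 := \partial_\theta$, whose bracket relations are $[e_1,e_2] = (2/\sqrt{t})e_3$, $[e_2,e_3] = 2\sqrt{t}\,e_1$, $[e_3,e_1] = (2/\sqrt{t})e_2$, $[e_i,e_4] = 0$. Routine use of the Koszul formula and the identity $R_{ijkl} = W_{ijkl} + P_{ik}g_{jl} - P_{il}g_{jk} + P_{jl}g_{ik} - P_{jk}g_{il}$ then yields: the Schouten tensor is diagonal with $P_{11}=(7t-4)/6$, $P_{22}=P_{33}=(8-5t)/6$, $P_{44}=(t-4)/6$; the Weyl tensor is diagonal on coordinate $2$-forms with $W_{1212}=W_{1313}=W_{2424}=W_{3434}=\tfrac{2}{3}(t-1)$ and $W_{1414}=W_{2323}=-\tfrac{4}{3}(t-1)$; and the nontrivial Christoffels are $\Gamma_{12}^3 = -\Gamma_{13}^2 = 2/\sqrt{t}-\sqrt{t}$, $\Gamma_{23}^1 = -\Gamma_{21}^3 = \sqrt{t}$, $\Gamma_{31}^2 = -\Gamma_{32}^1 = \sqrt{t}$, with all Christoffels involving the index $4$ vanishing. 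Feeding these into $C_{abc} = 2\nabla_{[a}P_{b]c}$ shows that $C_{abc}$ vanishes whenever any index equals $4$ and that, on $\{1,2,3\}$, the only nonzero components are $C_{123} = C_{312} = 2\sqrt{t}(t-1)$ and $C_{231} = -4\sqrt{t}(t-1)$ together with their antisymmetric counterparts.

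Since $C_{abc}$ vanishes whenever any index equals $4$, the contraction $\epsilon_4{}^{jkl}\tr(C_j W_{kl})$ reduces to the three traces $\tr(C_a W_{bc})$ with $\{a,b,c\} = \{1,2,3\}$, each of which is a short $2\times 2$ matrix product in the appropriate sub-block. Assembling them gives $\rho^\Phi_4 = \tfrac{8}{3}\sqrt{t}(t-1)^2$, hence $\Rho^\Phi(X) = \tfrac{8}{3}\sqrt{t}(t-1)^2\,\vol(M) \neq 0$ whenever $t\neq 1$. The only genuine obstacle is organizing the Weyl-and-Cotton computation without sign errors; a useful cross-check is that the self-dual and anti-self-dual parts of $W$ turn out to have identical spectra $\{\tfrac{2}{3}(t-1),\tfrac{2}{3}(t-1),-\tfrac{4}{3}(t-1)\}$, which forces $p_\Phi(W)=0$ pointwise, consistent both with the $SU(2)\times U(1)$-invariance of $p_\Phi(W)$ and with the vanishing of the Pontrjagin number of $S^3\times S^1$.
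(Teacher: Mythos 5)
Your proposal is correct and follows essentially the same route as the paper: both exhibit the $S^1$ Killing field, compute the Weyl and Cotton tensors of the Berger-sphere product explicitly, observe that $p_\Phi(W)=0$, and find that $\Rho^\Phi$ of that Killing field is a nonzero multiple of $(t-1)^2$ times the volume, the only difference being that you work in an orthonormal frame (with a homogeneity argument reducing to a pointwise evaluation) while the paper works in the left-invariant coframe $\alpha,\beta,\gamma,\theta$. Your curvature components check out (your overall constant differs from the paper's stated one by a factor of $t$, which is immaterial to the conclusion since $t>0$).
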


%\begin{remark}
% The last conclusion is not a surprise: the rigidity~\cite{Hitchin1974} of the Hitchin--Thorpe inequality implies that there is no Einstein metric on  $S^3\times S^1$.
%\end{remark}

\begin{remark}
 If $t\not=1$, then $(S^3 \times S^1 , \og_t)$ is not Bach flat, and hence not even locally conformally Einstein.
 We are not aware of an example of a closed, locally conformally Einstein four-manifold which can be shown to not be conformally Einstein using \cref{prop:einstein}.
\end{remark}

\begin{proof}
 For clarity of the exposition, we write this proof in index-free notation.
 
 It is well-known that
 \begin{align*}
  \nabla^{g_t}\alpha & = -\beta\otimes\gamma + \gamma\otimes\beta, \\
  \nabla^{g_t}\beta & = -(t-2)\alpha\otimes\gamma - t\gamma\otimes\alpha, \\
  \nabla^{g_t}\gamma & = (t-2)\alpha\otimes\beta + t\beta\otimes\alpha, \\
  \Ric_{g_t} & = 2t^2\alpha\otimes\alpha + 2(2-t)\beta\otimes\beta + 2(2-t)\gamma\otimes\gamma .
 \end{align*}
 From this it readily follows that
 \begin{align*}
  W^{\og_t} & = \frac{2(t-1)}{3}\Bigl[ t(\alpha\wedge\beta)\otimes(\alpha\wedge\beta) + t(\alpha\wedge\gamma)\otimes(\alpha\wedge\gamma) - 2(\beta\wedge\gamma)\otimes(\beta\wedge\gamma) \\
   & \qquad - 2t(\alpha\wedge\theta)\otimes(\alpha\wedge\theta) + (\beta\wedge\theta)\otimes(\beta\wedge\theta) + (\gamma\wedge\theta)\otimes(\gamma\wedge\theta) \Bigr], \\
   C^{\og_t} & = 2t(t-1)\left[ (\alpha\wedge\beta)\otimes\gamma - (\alpha\wedge\gamma)\otimes\beta - 2(\beta\wedge\gamma)\otimes\alpha\right] .
 \end{align*}
 We deduce that $p_\Phi(W)=0$ and
 \[ \star\rho^\Phi = \Phi WC = -\frac{8t(t-1)^2}{3}\alpha\wedge\beta\wedge\gamma . \]
 
 Let $T$ be the vector field on $S^1$ dual to $\theta$.  Then $T$ is a Killing field for $(S^3\times S^1,\og_t)$.  We compute that
 \[ \Rho^\Phi(T) = \frac{8t(t-1)^2}{3}\int_{S^3\times S^1} \alpha \wedge \beta \wedge \gamma \wedge \theta . \]
 In particular, if $t\not=1$, then $\Rho^\Phi\rv_{\mK}\not=0$.  The final conclusion follows from \cref{prop:einstein}.
\end{proof}

Taking Riemannian products with $k-1$ copies of $\bCP^2$ yields examples in general dimension $4k$.

\begin{proposition}
 \label{prop:berger_product}
 Let $\Phi$ be the homogeneous invariant polynomial of degree $2k$, $k\in\bN$, such that
 \[ \Phi_{s_1\dotsm s_{2k}}^{t_1\dotsm t_{2k}}\omega_{t_1}{}^{s_1}\dotsm\omega_{t_{2k}}{}^{s_{2k}} = \bigl(\omega_{rs}\omega^{sr}\bigr)^k \]
 for all $\omega_{ij}\in\mE_{ij}$.  Let $t>0$ and consider the Riemannian product
 \[ \bigl(S^3\times S^1 \times \underbrace{\bCP^2 \times \dotsm \times \bCP^2}_{\text{$k-1$ times}}, G_t := \og_t + \underbrace{g_{FS} + \dotsm + g_{FS}}_{\text{$k-1$ times}} \bigr) \]
 of $(S^3\times S^1,\og_t)$ with $k-1$ copies of $\bCP^2$ equipped with the Fubini--Study metric $g_{FS}$.  If $t\not=1$, then $\Rho^\Phi\rv_{\mK} \not=0$.
\end{proposition}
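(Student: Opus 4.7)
The plan is to show that the Killing field $T$ on the $S^1$ factor of $(M,G_t)$ satisfies $\Rho^\Phi(T)\neq 0$ when $t\neq 1$, by reducing essentially to the four-dimensional computation of \cref{prop:berger} multiplied by the nonzero Pontrjagin numbers of the $\bCP^2$ factors. First I would eliminate the $\frac{1}{4k}\nabla_i p_\Phi(W)$ contribution to $\rho_i^\Phi$ via integration by parts, using that $T$ is divergence-free. For the remaining piece, polarizing $\Phi(\omega,\dotsc,\omega)=(\tr\omega^2)^k$ writes $\Phi$ as a sum over pairings of $\{1,\dotsc,2k\}$ of products of traces; combined with the skew-symmetrization in \cref{eqn:rho-Phi}, all pairings contribute equally, giving the factorization
\[
 \Phi W^{2k-1}C = c_k\,\tr(CW)\wedge\tr(W^2)^{\wedge(k-1)},
\]
where $c_k\neq 0$ is a universal constant, $\tr(CW)_{abc}:=C_t{}^s{}_{[a}W_s{}^t{}_{bc]}$ is a 3-form, and $\tr(W^2)_{abcd}:=W_t{}^s{}_{[ab}W_s{}^t{}_{cd]}$ is a 4-form.

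Next, the identity $p_\Phi(W)=p_\Phi(\Rm)$ together with Chern--Weil theory identifies $\tr(W^2)$ with a nonzero multiple of the first Pontrjagin form $p_1(TM)$. Since the Levi-Civita connection of a Riemannian product decomposes, $p_1(M)=p_1(M_1)+p_1(M_2)$ as forms, with $M_1=S^3\times S^1$ and $M_2=(\bCP^2)^{k-1}$. Expanding $\tr(W^2)^{\wedge(k-1)}\propto(p_1(M_1)+p_1(M_2))^{\wedge(k-1)}$ and noting that $p_1(M_1)^{\wedge 2}=0$ (since $\dim M_1=4$) together with the bidegree constraints for a top form on $M$ (namely, $\theta$ contributes bidegree $(1,0)$, while $\tr(CW)$ contributes at most bidegree $(3,0)$), only the term with $\tr(W^2)^{\wedge(k-1)}$ in its pure-$M_2$ component $p_1(M_2)^{\wedge(k-1)}=(k-1)!\prod_i p_1(\bCP^2_i)$ contributes, paired with $\tr(CW)$ restricted to bidegree $(3,0)$. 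Using $\int_{\bCP^2}p_1=3\neq 0$, this reduces the problem to
\[
 \Rho^\Phi(T) = C'_k\cdot\int_{S^3\times S^1}\tr(CW)^{G_t}\rv_{M_1}\wedge\theta
\]
for some nonzero universal constant $C'_k$.

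For the remaining four-dimensional integral, homogeneity of $(S^3\times S^1,\og_t)$ makes its scalar curvature constant, so the restriction of the product Cotton tensor to $M_1$-indices equals $\tfrac{n_1-2}{n-2}C^{\og_t}$. The restriction $W^{G_t}\rv_{M_1}$ differs from $W^{\og_t}$ by a Kulkarni--Nomizu correction $\Delta P\odot g_{M_1}$ with $\Delta P=c_1\Ric_{\og_t}+c_2 g_{M_1}$; the pure-trace piece contributes zero via the trace-free identity $C_t{}^t{}_i=0$ and the Bianchi identity $C_{[ijk]}=0$, while the $c_1\Ric_{\og_t}$ part yields a 3-form built from $\Ric_{\og_t}$ and $C^{\og_t}$. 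Since both of these tensors are $\theta$-free on $(S^3\times S^1,\og_t)$, this correction contributes another scalar multiple of $\alpha\wedge\beta\wedge\gamma$, combining with the main term from $\tr(C^{\og_t}W^{\og_t})\rv_{S^3}$ into a single coefficient of $\alpha\wedge\beta\wedge\gamma\wedge\theta$.

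The principal obstacle is to verify that this combined coefficient is nonzero for each $t\neq 1$. One route is to directly extend the orthonormal-frame computations of \cref{prop:berger}, inserting the explicit formulas for $\Ric_{\og_t}$ and $C^{\og_t}$ into the correction term. A more conceptual alternative uses analyticity: the coefficient is a rational function of $t$ that vanishes at $t=1$, with leading order in $(t-1)$ near $1$ given by the correction (of order $(t-1)$, since $C^{\og_t}\sim t-1$ while the trace-free Ricci of $(S^3\times S^1,\og_1)$ is nonzero) rather than the main term (which is of order $(t-1)^2$); nonvanishing for all $t\neq 1$ then follows from analyticity together with ruling out isolated zeros by examining the limiting behavior as $t\to 0$ or $t\to\infty$.
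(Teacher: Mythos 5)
Your reduction of $\Rho^\Phi(T)$ to a four-dimensional integral over $S^3\times S^1$ is sound and follows essentially the paper's own route: the paper's proof consists of exactly the two factorization observations that $p_\Phi(W_{G_t})=0$ and that $\Phi W_{G_t}^{2k-1}C_{G_t}$ is a nonzero multiple of $p_1(\bCP^2)^{k-1}\wedge\star\rho_{\og_t}^{\cPhi}$, with $\cPhi$ the polynomial of \cref{prop:berger}, and your polarization of $\Phi$, the identity $\tr(W\dotwedge W)=\tr(\Rm\dotwedge\Rm)\propto p_1(TM)$, and the bidegree bookkeeping make those observations explicit (your integration-by-parts disposal of the $\nabla_i p_\Phi(W)$ term is a harmless variant of the paper's $p_\Phi(W_{G_t})=0$). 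You are in fact more careful than the paper in one respect: you correctly flag that $C_{G_t}$ and $W_{G_t}$ restricted to $M_1$-indices are not the intrinsic $C^{\og_t}$ and $W^{\og_t}$, but rather $\tfrac{1}{2k-1}C^{\og_t}$ and $W^{\og_t}$ plus a Kulkarni--Nomizu correction with symmetric factor $V=(\tfrac12-\tfrac{1}{4k-2})\Ric_{\og_t}+(\mathrm{const})\,\og_t$. The genuine gap is that you stop at the decisive step: you never establish that the combined coefficient of $\alpha\wedge\beta\wedge\gamma$ is nonzero for every $t\neq1$, and of your two proposed routes for doing so, the ``conceptual'' one you lean on is flawed.

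The analyticity route fails because its order count is wrong. Working in the orthonormal coframe $(\sqrt{t}\,\alpha,\beta,\gamma,\theta)$, the trace part of $V$ is killed by $C_{[ijk]}=0$ as you say, and the remaining correction $C^{\og_t}{}_t{}^s{}_{[a}(\Ric_{\og_t}\odot\og_t)_s{}^t{}_{bc]}$ reduces to the skew-symmetrization of $(C_{cba}-C_{bca})(v_b+v_c)$, where $(v_1,v_2,v_3,v_4)=(2t,2(2-t),2(2-t),0)$ are the Ricci eigenvalues; this produces the scalar $\tfrac{2c}{3}(v_2+v_3-2v_1)$ with $c=2\sqrt{t}(t-1)$ the Cotton scale. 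Since $v_2+v_3-2v_1=-8(t-1)$, the correction is $O((t-1)^2)$, \emph{not} $O(t-1)$: the contraction sees only the trace-free part of $\Ric_{\og_t}$ restricted to the $S^3$ block, which vanishes at $t=1$ (round fiber), even though the trace-free Ricci of $\og_1$ on $S^3\times S^1$ is nonzero. So the correction does not dominate the main term near $t=1$; and in any case analyticity together with the limits $t\to0,\infty$ only shows zeros are isolated, which does not rule out a zero at some finite $t\neq1$, as the proposition requires. Your direct route, had you carried it out, does close the gap: one finds the correction equals $\tfrac{2(k-1)}{2k-1}$ times the main term $-\tfrac{16\sqrt{t}(t-1)^2}{3}$, with the same sign, so the total is proportional to $\tfrac{4k-3}{2k-1}\sqrt{t}(t-1)^2\neq0$ for $t\neq1$ --- and this conclusion is robust against convention-dependent signs, since $1\pm\tfrac{2(k-1)}{2k-1}$ equals $\tfrac{4k-3}{2k-1}$ or $\tfrac{1}{2k-1}$, nonzero either way. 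Without this computation (which also justifies the paper's terse ``nonzero multiple'' claim), your proposal does not prove the proposition.
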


\begin{proof}
 Let $\cPhi$ be the invariant polynomial of \cref{prop:berger}.
 
 First observe that $p_\Phi(W_{G_t})$ is a nonzero multiple of $p_1(\bCP^2)^{k-1}\wedge p_{\cPhi}(W_{\og_t})$.  As noted in the proof of \cref{prop:berger}, it holds that $p_{\cPhi}(W_{\og_t})=0$.  Therefore $p_\Phi(W_{G_t})=0$.
 
 Next observe that $\Phi W_{G_t}^{2k-1}C_{G_t}$ is a nonzero multiple of $p_1(\bCP^2)^{k-1}\wedge\star\rho_{\og_t}^{\cPhi}$.  Since $\int_{\bCP^2} p_1(\bCP^2)\not=0$, we conclude that $\Rho_{G_t}^\Phi(T)$ is a nonzero multiple of $\Rho_{\og_t}^{\cPhi}(T)$.  Hence, by the proof of \cref{prop:berger}, it holds that $\Rho_{G_t}^\Phi\rv_{\mK}\not=0$.
\end{proof}

Finally, combining \cref{prop:berger,prop:berger_product,prop:einstein} yields \cref{thm:pontrjagin-not-image}.
\section{$\xi_i^{(2)}$ and the divergence of natural tensors}
\label{sec:natural}

We conclude by proving that, in dimension four, the natural conformal invariant
\begin{equation*}
 \xi_i^{(2)} = 2W_{istu}C^{tus} + \frac{1}{8}\nabla_i (W_{stuv}W^{stuv})
\end{equation*}
is not expressible as the divergence of a natural symmetric $(0,2)$-tensor field of weight $-2$.  This follows from the classification of the natural elements of $\mE_{(ij)_0}[-2]$ in dimension four.

\begin{proposition}
 \label{prop:unnatural}
 In dimension four, the vector space of natural elements of $\mE_{(ij)_0}[-2]$ is spanned by the set
 \begin{equation}
  \label{eqn:spanning-set}
  \left\{ B_{ij}, W_{isjt}P^{st}, \tf P_i^sP_{sj}, \tf JP_{ij}, \tf\nabla_{ij}^2J \right\} .
 \end{equation}
 In particular, $\xi_i^{(2)}$ is not the divergence of a natural element of $\mE_{(ij)_0}[-2]$.
\end{proposition}

\begin{proof}
 On a pseudo-Riemannian four-manifold, the space of natural symmetric $(0,2)$-tensor fields of weight $-2$ is spanned by partial contractions of $\nabla^2\Rm\otimes g$ and $\Rm\otimes\Rm\otimes g$.  Equivalently, it is spanned by $\Delta P$, $\nabla_{ij}^2J$, ${\check W}_{ij}^2:=W_{istu}W_j{}^{stu}$, $W_{isjt}P^{st}$, $P_i^sP_{sj}$, $JP_{ij}$, and products of their traces with $g_{ij}$.  Using the facts that, in dimension four,
 \[ B_{ij} = \Delta P_{ij} - \nabla_{ij}^2 J + 2W_{isjt}P^{st} - 4P_i^sP_{sj} + \lv P\rv^2g_{ij} \]
 and $\tf{\check W}_{ij}^2=0$, we conclude that the space of natural elements of $\mE_{(ij)_0}[-2]$ is spanned by \cref{eqn:spanning-set}.
 
 Next, it is known that the Bach tensor is divergence-free~\cite{FeffermanGraham2012}.
 Direct calculation gives
 \begin{align*}
  \nabla^j(W_{isjt}P^{st}) & = -C_{sit}P^{st} + \frac{1}{2}W_{istu}C^{tus} , \\
  \nabla^j(\tf P_i^sP_{sj}) & = P_{is}\nabla^sJ + \frac{1}{4}\nabla_i(P_{st}P^{st}) + C_{sit}P^{st} , \\
  \nabla^j(\tf JP_{ij}) & = P_{is}\nabla^sJ + \frac{1}{4}\nabla_i(J^2) , \\
  \nabla^j(\tf\nabla_{ij}^2J) & = \frac{3}{4}\nabla_i\Delta J + 2P_{is}\nabla^sJ + \frac{1}{2}\nabla_i(J^2) . \\
 \end{align*}
 It readily follows that there is not a natural element of $\mE_{(ij)_0}[-2]$ with divergence equal to $\xi_i^{(2)}$.
%  trace-free symmetric $(0,2)$-tensor fields of weight $-2$ on four-dimensional manifolds is spanned by
% \[ \left\{ B_{ij}, W_{isjt}P^{st}, \tf P_i^sP_{sj}, \tf JP_{ij}, \tf\nabla_{ij}^2J \right\} . \]
% It is known that $B$ is conformally invariant~\cite{FeffermanGraham2012}, while straightforward computations yield
% \begin{align*}
%  D_g W_{isjt}P^{st}(\Upsilon) & = -W_{isjt}\Upsilon^{st} , \\
%  D_g\tf P_i^sP_{sj}(\Upsilon) & = -2P_i^s\Upsilon_{sj} + \frac{1}{2}\lp P,\nabla^2\Upsilon\rp g_{ij} , \\
%  D_g\tf JP_{ij}(\Upsilon) & = -J\Upsilon_{ij} - (\Delta\Upsilon)P_{ij} + \frac{1}{2}J\Delta\Upsilon g_{ij}, \\
%  D_g\tf\nabla_{ij}^2J(\Upsilon) & = -\nabla_{ij}^2\Delta\Upsilon - 2J\nabla_{ij}^2\Upsilon - 3(\Upsilon_i\nabla_j J + \Upsilon_j\nabla_i J) \\
%   & \quad + \frac{1}{4}\left(\Delta^2\Upsilon + 2J\Delta\Upsilon + 6\lp\nabla\Upsilon,\nabla J\rp \right)g .
% \end{align*}
% Suppose that $I_{a,b,c,e}:=aW_{isjt}P^{st}+b\tf P_i^sP_{sj}+c\tf JP_{ij}+e\tf\nabla_{ij}^2J$ is conformally invariant.  Then $D_gI_{a,b,c,e}(\Upsilon)=0$ for all Riemannian four-manifolds $(M^4,g)$ and all $\Upsilon\in C^\infty(M)$.  It is straightforward to exploit this significant freedom to show that $a=b=c=e=0$ (cf.\ \cite{Graham1992}).  This yields the first claim.
% 
% Finally, since the Bach tensor is divergence-free~\cite{FeffermanGraham2012}, we conclude that the divergence of any natural element of $\mE_{(ij)_0}[-2]$ is zero.  However, $\xi_i^{(2)}$ is nontrivial; e.g.\ it is nonzero on many K3 surfaces~\cite{CaseTakeuchi2019}.
\end{proof}

It is natural to conjecture that $\xi_i^{(k)}$, $k\geq2$, cannot be expressed as the divergence of a natural element of $\mE_{(ij)}[2-2k]$ in dimension $2k$.
However, an attempt to verify this by identifying a basis for $\mE_{(ij)_0}[2-2k]$ is impractical for general $k$.

\subsection*{Acknowledgments}
I thank the anonymous referee for helpful comments which led to an improved version of \cref{prop:unnatural}.

\bibliography{bib}
\end{document}